\documentclass[12pt,reqno]{article}

\usepackage{graphicx}
\usepackage{amsmath,amsthm,amsfonts,amssymb,latexsym,mathrsfs,color}
\usepackage{graphicx,cite}
\usepackage{epstopdf}
\usepackage{epsfig}
\usepackage{subfigure}
\usepackage{tikz}
\usetikzlibrary{decorations.pathreplacing,calligraphy}
\usepackage{caption}
\usepackage{enumerate}
\usepackage{caption}
\newtheorem{thm}{Theorem}[section]
\newtheorem{lem}{Lemma}[section]
\newtheorem{coro}{Corollary}[section]
\newtheorem{prop}{Proposition}[section]

\theoremstyle{definition}
\newtheorem{definition}{Definition}[section]
\newtheorem{example}{Example}[section]

\newtheorem{remark}{Remark}[section]
\numberwithin{equation}{section}
\newcommand{\RNum}[1]{\uppercase\expandafter{\romannumeral #1\relax}}

\title{Lorentzian polynomials and log-concavity of  the independence polynomials of  graphs  }
 
\author{Lily Li Liu\footnote{{Corresponding author}
\newline\hspace*{5mm}
{\it Email addresses:}
 liulily@qfnu.edu.cn (L.L. Liu)
 hytang@qfnu.edu.cn (H. Tang),}, Hongyue Tang}
\date{\footnotesize
School of Mathematical Sciences,
Qufu Normal University,
Qufu 273165, PR China}
\begin{document}

\maketitle

\begin{abstract}
In this paper,
we first construct two graphs $\mathcal{F}(l,m,t,s)$ and $\mathcal{G}_4(l,m,t,s)$.
Then we introduce the graph $\mathcal{F}_n(l,m,t,s)$ and the operator $E_{\mathcal{G}_4(l,m,t,s)}$,
where $\mathcal{F}_n(l,m,t,s)$ is defined by identifying the vertex  $c$ of $n$ copies of $\mathcal{F}(l,m,t,s)$,
and $E_{\mathcal{G}_4(l,m,t,s)}$ is defined by replacing each edge of $G$ with $\mathcal{G}_4(l,m,t,s)$,
for any simple finite undirected graph $G$.
By using the theory of Lorentzian polynomials,
we prove that the independence polynomials of the graphs $\mathcal{F}_n(l,m,t,s)$ and the image graphs of $E_{\mathcal{G}_4(l,m,t,s)}$ are log-concave, respectively. 
As applications,
our results not only make progress on the conjecture of Alavi, Malde, Schwenk and Erd\H{o}s, 
but also generalize the results of Bendjeddou and Hardiman.
\bigskip\\
{\sl 2020 MSC:}\quad  05A20; 05C31; 05C69
\bigskip\\
{\sl Keywords:}\quad
Unimodality;
Log-concavity;
Independence polynomial;
Pre-Lorentzian graph;
Lorentzian polynomial
\end{abstract}

\section{Introduction}
\hspace*{\parindent}
An {\it independent set} in a graph is a set of pairwise non-adjacent vertices.
A {\it maximum independent set} of a graph $G$ is a largest independent set and its size is denoted by $\alpha(G)$.
Let $i_k(G)$ denote the number of independent sets of size $k$ in $G$. 
The independence polynomial of $G$ is defined to be
\begin{equation*}
  I(G; x) = \sum^{\alpha(G)}_{k=0}i_{k}(G)x^k,   \quad\quad  i_0(G)=1,
\end{equation*}
(see~\cite{GH1983} for instance).

Let $\alpha=(a_k)^n_{k=0}$ be a sequence of positive numbers. 
The sequence is called {\it unimodal} if there exists an index $0\le m\le n$,
called the {\it mode} of the sequence,
such that $a_0 \leq a_1 \leq \cdots \leq a_{m-1} \leq a_{m} \geq a_{m+1} \geq \cdots \geq a_{n-1} \geq a_{n}$.
The sequence is called {\it log-concave} if $a_{k-1}a_{k+1} \leq a^2_k$ for $k=1, 2,\ldots, n-1$.
Clearly, the log-concavity implies the unimodality.
Unimodal and log-concave sequences occur naturally in combinatorics, algebra, analysis, geometry, probability and statistics.
We refer the reader to the survey papers by Stanley~\cite{Sta89}, Brenti~\cite{Bre89, Bre94} and Br\"and\'en~\cite{Bra15} for various results on the unimodality and log-concavity.
In~\cite{Bre89},
Brenti pointed out that the unimodality and log-concavity have ``one-line" definitions,
but it is very difficult to prove them.
A stronger property is the real-rootedness of its generating function $f(x)=\sum_{k=0}^na_kx^k$ (any root of the polynomial $f(x)$ is real).

There are two motivations to study the log-concavity of independence polynomials.
The first one is the following conjecture,
posed by Alavi, Malde, Schwenk and Erd\H{o}s~\cite{AM1987}:
The independence polynomial of every tree or forest is unimodal.
There have been quite a few results concerned with Alavi et al.'s conjecture (see~\cite{BB2014,B2018,CS07,GH2018,KL2023,KL2025,LM2003,LTZ,LL,TCL,TL25,LM06,LM08,WZ2011,YM,ZZ2023,ZZ2024,Z2016,ZBX18,ZBX22,ZC2019,ZW2020,Z2007}),
which remains open.
Yosef et al.~\cite{YM} proved the log-concavity of independence polynomials of trees with at most  $25$ vertices.
Kadrawi and Levit~\cite{KL2025,KL2023} provided a counterexample,
which showed that Alavi et al.'s conjecture cannot be strengthened to the log-concavity in general.
The other motivation is  based on the result by Chudnovsky and Seymour~\cite{CS07}, 
which states that the independence polynomials of all claw-free graphs are real-rooted. 
Therefore, 
it is natural to consider which clawed graphs also have real-rooted independence polynomials, 
or just have the unimodal and log-concave independence polynomials.

Numerous methods have been developed to prove the unimodality and log-concavity of independence polynomials. 
We present three extremely useful approaches.
The first one is by using the operators of graphs.
Many operators have been applied to prove the real-rootedness of independence polynomials for clawed trees,
such as the clique cover product~\cite{Z2016}, the cycle cover product~\cite{ZBX22} and the incidence product~\cite{ZBX18}  studied by Zhu et al.,
and more recently, the rooted product studied by Zhu et al.~\cite{ZZ2023,ZZ2024} and Liu et al.~\cite{LTZ,TCL}.
The second approach, introduced by Li et al.~\cite{LL}, is based on the theory of symmetric functions.
Li et al.~\cite{LL} obtained that the independence polynomials of all spiders are log-concave.
The third approach is Brändén and Huh's theory of Lorentzian polynomials~\cite{BH2020}.
Using this theory, Brändén and Huh provided an alternative proof of the strongest version of Mason's conjecture, that for any matroid,
the sequence enumerating the independent sets by size is ultra log-concave $\Big($the sequence $\Big(\frac{a_k}{\binom{n}{k}}\Big)_{0\le k\le n}$ is log-concave $\Big)$.
By the theory of Lorentzian polynomials,
Bendjeddou and Hardiman~\cite{BH2025} showed the log-concavity of independence polynomials for graphs obtained by replacing every edge with $W_4$ (see Figure~\ref{f1.3}).
In order to prove the log-concavity,
they introduced the pre-Lorentzian property of graphs,
which is defined via the Lorentzian property of their homogeneous colored independence polynomials multiplied by a suitable monomial.

\begin{figure}[htbp] 
\centering
\begin{tikzpicture}[scale = 0.9]
\draw[line width=1.2pt] (-3,0) -- (-3,-1.5);
\draw[line width=1.2pt] (-1,0) -- (-1,-1.5);
\draw[line width=1.2pt] (1,0) -- (1,-1.5);
\draw[line width=1.2pt] (3,0) -- (3,-1.5);
\draw[line width=1.2pt] (-3,0) -- (-1,0) -- (1,0) -- (3,0);

\fill[red](-3,0) circle(0.6ex);
\fill(-1,0) circle(0.6ex);
\fill(1,0) circle(0.6ex);
\fill[red](3,0) circle(0.6ex);

\fill(-3,-1.5) circle(0.6ex);
\fill(-1,-1.5) circle(0.6ex);
\fill(1,-1.5) circle(0.6ex);
\fill(3,-1.5) circle(0.6ex);

\end{tikzpicture}
\caption{$W_4$
}\label{f1.3}
\end{figure}

In this paper,
following Bendjeddou and Hardiman's idea,
we construct a family of pre-Lorentzian graphs $\mathcal{F}_n(l,m,t,s)$.
Using their pre-Lorentzian property, 
we prove that for any graph formed by replacing each edge with the graph $\mathcal{G}_4(l,m,t,s)$,
which generalizes $W_4$,
the independence polynomial is log-concave.
The graphs $\mathcal{F}_n(l,m,t,s)$ and the image graphs of $E_{\mathcal{G}_4(l,m,t,s)}$ not only contain claws,
but are  also trees in some special cases.
Our results provide further evidence for the conjecture of Alavi, Malde, Schwenk and Erd\H{o}s, 
and also generalize the results of Bendjeddou and Hardiman.

The organization of this paper is as follows.
In section~2,
we review some basic facts about the Lorentzian polynomial, the pre-Lorentzian graph and certain straightforward lemmas from linear algebra.
In section~3,
we first introduce the pre-Lorentzian graph $\mathcal{F}_n(l,m,t,s)$ and the operator $E_{\mathcal{G}_4(l,m,t,s)}$.
Then, for every simple finite undirected graph $G$,
 we prove that the independence polynomials of the image graphs of $E_{\mathcal{G}_4(l,m,t,s)}$ are log-concave.

\section{Preliminaries}
\hspace*{\parindent}
In this section,
we review some basic facts,
which can also be found in~\cite{BH2025} and~\cite{BH2020}.

\subsection{Lorentzian polynomials}
\hspace*{\parindent}
Let $\mathbb{N},\ \mathbb{N}^*,\ \mathbb{Z}$ and $\mathbb{R}$ denote the set of nonnegative integers, positive integers, integers and real numbers, respectively.
For $n \in \mathbb{N}$, we use $[n]$ to denote the set $\{1,2,\ldots,n \}$. 
For all $\alpha \in \mathbb{N}^n $, denote $x^{\alpha}$ by $\prod{x_i^{\alpha_i}}$. 
Assume that the coefficients of all polynomials are over $\mathbb{R}$. 
For $n \in \mathbb{N}^*$, we use $<$ and $\leq$ to denote the partial orders given by
\begin{align*}
 \alpha < \beta \Longleftrightarrow  \alpha_i < \beta_i   \quad \mathrm{and} \quad \alpha \leq \beta \Longleftrightarrow \alpha_i \leq \beta_i , \quad \mathrm{for}\ i \in [n].
\end{align*}

Let $e_i$ denote the standard basis vector $(0, \ldots, 0, 1, 0, \ldots, 0)$, where $1$ is in the $i$th position for $1\le i\le n$.
Following Br\"and\'en and Huh~\cite{BH2020},
we recall the definitions and properties of the Lorentzian polynomials.

\begin{definition}\label{d1.5}
 A finite subset $A \subseteq \mathbb{Z}^n$ is called {\it $M$-convex} if for any $\alpha, \beta \in A$ and any index $i$
 satisfying $\alpha_i > \beta_i$, there is an index $j$ satisfying
 \begin{align*}
  \alpha_j<\beta_j\ \mathrm{and} \  \alpha-e_i+e_j \in A.
 \end{align*}
\end{definition}

\begin{definition}\label{d1.6}
The {\it support} of a multivariate polynomial $p = \sum_{\alpha \in \mathbb{N}^n}c_\alpha x^\alpha $ is given by the set $$\mathrm{Supp}(p) = \{\alpha \in \mathbb{N}^n \colon c_\alpha \neq 0 \}.$$
\end{definition}

\begin{definition}\label{d1.7}
A homogeneous polynomial $p(x_1,x_2,\ldots,x_n)$ of degree $d$ with nonnegative coefficients is called
{\it Lorentzian} if
 \begin{itemize}
   \item Supp$(p)$ is $M$-convex,
   \item for every $(d - 2)$th partial derivative of $p$, the corresponding Hessian matrix has at most one positive eigenvalue, where the Hessian matrix of $p$ is the symmetric matrix $$\mathcal{H}_p(x)=(\partial_i\partial_j p)_{i,j=1}^n.$$
 \end{itemize}

\end{definition}

It is well known that the univariate monomial $ax^n$ and the multivariate linear polynomial in $n$ variables $a_1x_1+a_2x_2+\cdots+a_nx_n$ are Lorentzian polynomials, respectively.
The Lorentzian polynomials have the following two properties.

\begin{prop}[\rm\cite{BH2020}]\label{p1.1}
A Lorentzian polynomial is also Lorentzian under identifying variables.
\end{prop}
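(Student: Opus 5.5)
The plan is to identify two variables at a time and check both defining conditions of Definition~\ref{d1.7} directly. Let $p(x_1,\ldots,x_n)$ be Lorentzian of degree $d$. Identifying, say, $x_{n-1}$ and $x_n$ gives $q(x_1,\ldots,x_{n-1}) = p(x_1,\ldots,x_{n-1},x_{n-1})$. This can be written as $q(y)=p(Ay)$, where $A$ is the $n\times(n-1)$ matrix with $x_i=y_i$ for $i\le n-1$ and $x_n=y_{n-1}$. Any identification of variables is a finite sequence of such steps, after relabeling, so the general case follows by induction. Clearly $q$ is homogeneous of degree $d$ with nonnegative coefficients.

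\emph{Hessian condition.} By the chain rule, $\partial_{y_i}q = \sum_k A_{ki}(\partial_{x_k}p)(Ay)$. Iterating, every $(d-2)$th partial derivative $\partial_y^{\beta}q$ is a nonnegative combination $\sum_{\gamma} c_\gamma (\partial_x^{\gamma}p)(Ay)$ with $|\gamma|=d-2$. Each $\partial_x^\gamma p$ is a quadratic form $x^T H_\gamma x$, where $H_\gamma$ has at most one positive eigenvalue. So $\partial^\beta_y q$ is the quadratic form $y^T A^T H A y$ with $H=\sum c_\gamma H_\gamma$.

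The main obstacle is that a nonnegative sum of matrices each having at most one positive eigenvalue need not have at most one positive eigenvalue. To get around this I would use the fact from \cite{BH2020} that the Hessians $H_\gamma$ of the quadratic derivatives of a Lorentzian $p$ are all of the form $\nabla^2(\partial^{\gamma}p)$ coming from one polynomial. Concretely, $\sum_\gamma c_\gamma\partial_x^\gamma$ is itself the derivative $(v_1\cdot\nabla)\cdots(v_{d-2}\cdot\nabla)$ along nonnegative directions $v_j$ (the columns of $A$). Brändén--Huh show that the quadratic form $D_{v_1}\cdots D_{v_{d-2}}p$ has at most one positive eigenvalue for $v_j\in\mathbb{R}^n_{\ge0}$; this is the equivalence of Lorentzian with the stability of the cone of directional derivatives, and I would invoke it. Granting this, $H$ has at most one positive eigenvalue. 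By Sylvester's law of inertia (Cauchy interlacing for the congruence $A^THA$ restricted to the image of $A$), $A^THA$ also has at most one positive eigenvalue.

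\emph{Support condition.} $\mathrm{Supp}(q)$ is the image of $\mathrm{Supp}(p)$ under the map $\alpha\mapsto(\alpha_1,\ldots,\alpha_{n-2},\alpha_{n-1}+\alpha_n)$. No cancellation occurs because the coefficients are nonnegative. To check M-convexity, take $\alpha',\beta'$ in the image with $\alpha'_i>\beta'_i$, and lift them to $\alpha,\beta$. If $i\le n-2$, the exchange axiom in $\mathrm{Supp}(p)$ gives $j$ with $\alpha_j<\beta_j$ and $\alpha-e_i+e_j\in\mathrm{Supp}(p)$. If $j\le n-2$ we are done. If $j\in\{n-1,n\}$, we need $\alpha'_{n-1}<\beta'_{n-1}$. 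When this fails, I would choose the lifts to maximize agreement on the merged coordinates and reapply the exchange. Alternatively, I would cite that M-convex sets are preserved under aggregation of coordinates, a standard fact from Murota's theory. If $i$ is the merged index, pick the coordinate among $n-1,n$ where $\alpha$ exceeds $\beta$ and apply the same exchange argument.
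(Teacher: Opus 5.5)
The paper does not prove this itself. It cites Br\"and\'en--Huh, where identifying variables is a special case of their theorem that $f(Av)$ is Lorentzian whenever $f$ is Lorentzian and $A$ has nonnegative entries. There, Lorentzian polynomials are defined as limits of strictly Lorentzian ones, so M-convexity of the new support follows from their equivalence theorem rather than being checked by hand. You instead verify the two conditions of Definition~\ref{d1.7} directly, which is a genuinely different route.

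Your Hessian half is sound. The chain-rule identity $\partial_y^{\beta}q=(D_{a_1}^{\beta_1}\cdots D_{a_{n-1}}^{\beta_{n-1}}p)(Ay)$, with $a_i$ the columns of $A$, is correct. So is your warning that nonnegative sums of matrices with one positive eigenvalue are not enough, your appeal to closure of Lorentzian polynomials under $D_v$ for $v\in\mathbb{R}^n_{\ge 0}$, and the inertia argument for $A^{T}HA$. Be aware, though, that this closure property is essentially as strong as the statement you are proving. For example, $D_vf(x)=\partial_t f(x+tv)|_{t=0}$ itself comes from a nonnegative substitution. So this half reduces the proposition to Br\"and\'en--Huh rather than proving it independently. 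What it buys is a clear view of the single ingredient identification needs.

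For the support, your exchange argument as written does not close: ``choose lifts to maximize agreement and reapply'' is not yet an argument. It can be finished without Murota, as follows. Recall $\alpha'_{n-1}=\alpha_{n-1}+\alpha_n$.

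\emph{Case $i\le n-2$.} Suppose the exchange returns $j=n-1$ and $\alpha'_{n-1}\ge\beta'_{n-1}$. Put $\gamma=\alpha-e_i+e_{n-1}$. Then $\gamma_{n-1}\le\beta_{n-1}$ and $\gamma_{n-1}+\gamma_n=\alpha'_{n-1}+1>\beta'_{n-1}$, so $\gamma_n>\beta_n$. Exchange $\gamma$ against $\beta$ at index $n$. The returned $k$ satisfies $k\neq i$, because $\gamma_i\ge\beta_i$.
\begin{itemize}
\item If $k\le n-2$, then $\gamma-e_n+e_k$ lifts $\alpha'-e_i+e_k$, and $\alpha'_k<\beta'_k$, as required.
\item If $k=n-1$, replace $\gamma$ by $\gamma-e_n+e_{n-1}$ and repeat.
\end{itemize}
This terminates because $\gamma_{n-1}$ increases while staying $\le\beta_{n-1}$.

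\emph{Case $i$ is the merged index.} Exchange repeatedly at whichever of $n-1,n$ exceeds $\beta$. Each return to the partner coordinate gives another lift of $\alpha'$ strictly closer to $\beta$ on $\{n-1,n\}$. So eventually the exchange returns some $k\le n-2$ with $\alpha_k<\beta_k$, and the resulting point lifts $\alpha'-e_{n-1}+e_k$.

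Citing Murota's aggregation theorem is also acceptable. If you do that, drop the unfinished direct attempt.
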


\begin{prop}[\rm\cite{BH2020}]\label{p1.2}
The product of two Lorentzian polynomials is a Lorentzian polynomial.
\end{prop}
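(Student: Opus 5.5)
Since this statement is quoted from~\cite{BH2020}, the plan below reduces it to Proposition~\ref{p1.1} plus a direct check of Definition~\ref{d1.7}. The strategy is to first separate the variables and then glue them back together.

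Let $f(x_1,\ldots,x_n)$ and $g(x_1,\ldots,x_n)$ be Lorentzian of degrees $d_1$ and $d_2$. Introduce fresh variables $y_1,\ldots,y_n$ and consider
$$P(x,y)=f(x)\,g(y),$$
which is homogeneous of degree $d=d_1+d_2$ with nonnegative coefficients. If $P$ is Lorentzian, then identifying $y_i$ with $x_i$ for every $i$ gives $f(x)g(x)$. By Proposition~\ref{p1.1} this product is Lorentzian. So it suffices to verify the two conditions of Definition~\ref{d1.7} for $P$.

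\emph{Support.} We have $\mathrm{Supp}(P)=\mathrm{Supp}(f)\times\mathrm{Supp}(g)$. Take $(a,b),(a',b')$ in this set and an index where the first exceeds the second.
\begin{itemize}
\item Suppose the index is an $x$-index $i$, so $a_i>a'_i$. Since $f$ is homogeneous, $|a|=|a'|$, and M-convexity of $\mathrm{Supp}(f)$ yields $j$ with $a_j<a'_j$ and $a-e_i+e_j\in\mathrm{Supp}(f)$. Then $(a-e_i+e_j,b)$ lies in $\mathrm{Supp}(P)$, and $j$ is a valid exchange index.
\item A $y$-index is handled symmetrically using $\mathrm{Supp}(g)$.
\end{itemize}
Hence $\mathrm{Supp}(P)$ is M-convex.

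\emph{Hessians.} Any $(d-2)$th partial derivative of $P$ has the form $\partial^\beta f(x)\cdot\partial^\gamma g(y)$ with $|\beta|+|\gamma|=d-2$. There are three cases.
\begin{itemize}
\item $|\beta|=d_1-2$ and $|\gamma|=d_2$. Then $\partial^\gamma g=c\ge 0$ is a constant, and the Hessian is block diagonal, $\mathrm{diag}(c\,\mathcal H_{\partial^\beta f},0)$. Since $f$ is Lorentzian, $\mathcal H_{\partial^\beta f}$ has at most one positive eigenvalue, and so does this block matrix.
\item $|\beta|=d_1$ and $|\gamma|=d_2-2$. This is symmetric to the previous case.
\item $|\beta|=d_1-1$ and $|\gamma|=d_2-1$. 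Then the derivative is $\ell_1(x)\ell_2(y)$ for linear forms with coefficient vectors $u,v\ge 0$. Its Hessian is
$$\begin{pmatrix}0&uv^{T}\\ vu^{T}&0\end{pmatrix},$$
which has rank at most $2$ with eigenvalues $\pm\|u\|\|v\|$ and zeros. So it has at most one positive eigenvalue.
\end{itemize}
Derivatives that vanish identically are trivial.

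I expect no step to be a real obstacle once the variables are separated. The only point needing care is the appeal to Proposition~\ref{p1.1}: it must allow identifying a $y$-variable with an $x$-variable, i.e.\ substitutions between the two blocks, which is the general form in~\cite{BH2020}. If one wanted to avoid that citation, the hard part would be checking M-convexity of the Minkowski sum $\mathrm{Supp}(f)+\mathrm{Supp}(g)$ directly, which is Murota's theorem. The separated-variables route sidesteps this.
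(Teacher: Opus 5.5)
Your proof is correct. The support and Hessian checks for $f(x)g(y)$ cover every case, and the last step is a literal instance of Proposition~\ref{p1.1}: $f(x)g(y)$ is a single polynomial in $2n$ variables and you identify its variables in pairs, so the caveat you raise is not a real concern. The paper gives no proof of its own and simply quotes the result from Br\"and\'en and Huh. As far as I recall, their argument follows the same lines as yours: prove the Lorentzian property for the product in disjoint sets of variables, then specialise using invariance under nonnegative linear changes of variables.
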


\subsection{Pre-Lorentzian graphs}
\hspace*{\parindent}
The notion of pre-Lorentzian graph was introduced by Bendjeddou and Hardiman~\cite{BH2025}.

\begin{definition}\label{d1.2}
 A {\it coloured graph} $\mathcal{G}$ is a pair $(G, i)$, where $G$ is a graph and $i$ is a map from $V(\mathcal{G})$ to some indexing set $I$, called {\it the set of colours}.
\end{definition}

\begin{definition}[Free vertex]\label{d1.3}
For a vertex $v$ in a coloured graph, we say that $v$ (or its colour) is
{\it free} if the subset of vertices that share a colour with $v$ is $\{v\}$.
\end{definition}

\begin{definition}[Coloured independence polynomial]\label{d1.4}
Let $\mathcal{G} = (G, i)$ be a finite coloured graph.
The {\it coloured independence polynomial}, denoted as $C(\mathcal{G})$, is defined by 
\begin{align*}
  C(\mathcal{G}; x_{i(v)})=\sum_{S\subseteq V(G)}x_S\Big|_{x_v=x_{i(v)}},
\end{align*}
where $S$ is an independent set of $G$ and $x_S = \prod_{v \in S}x_{v}$.
\end{definition}

So the coloured independence polynomial is obtained by identifying variables corresponding to vertices which share the same colour.
We also say that a variable $x_i$ is {\it free} if the associated colour is.
\begin{definition}\label{d1.8}
A {\it partitioned graph} is a coloured graph with a distinguished colour called the
{\it bound colour} such that every colour other than the bound colour is free.
\end{definition}
As before, we extend the bound colour to vertices and variables in the obvious way.

\begin{definition}\label{d1.9}
Let $p$ be a polynomial in $x_1,x_2,\ldots,x_n$. We write $H(p)$ to denote the homogeneous polynomial in $x_1,x_2,\ldots,x_n, y$ of minimal degree which satisfies $H(p)|_{y=1}=p$. We call $H(p)$ the {\it homogenisation} of $p$ and $y$ the {\it homogenising variable}.
\end{definition}

\begin{definition}\label{d1.10}
Let $\mathcal{G}$ be a partitioned graph. We call $\mathcal{G}$ {\it pre-Lorentzian} if there exists a $k \in \mathbb{N}$ such
that $$(xy)^k H(C(\mathcal{G}))$$ is a Lorentzian polynomial, where $x$ is the variable associated with the bound colour and $y$ is the homogenising variable.
\end{definition}

By the definition of Lorentzian polynomials,
it would therefore be equivalent to
say that $\mathcal{G}$ is pre-Lorentzian if
\begin{itemize}
  \item $\mathrm{Supp} (H(C(\mathcal{G})))$ is $M$-convex,
  \item for every $(n + 2k - 2)$th partial derivative of $(xy)^k \cdot H(C(\mathcal{G}))$, where $n$ is the degree of $C(\mathcal{G})$, the corresponding Hessian matrix has at most one positive eigenvalue.
\end{itemize}

In this paper,
we consider two gluing processes.
The first one is the identification of vertices,
which is defined by identifying one vertex of $n$ graphs to a common new vertex.
This operator is often used in the graph theory to construct more complex graphs from simpler ones.
For example,
the star $K_{1,n}$ can be obtained by identifying one endpoint of $n$ copies of the path $P_2$.
The other one is introduced by Bendjeddou and Hardiman~\cite{BH2025}.
\begin{definition}
Given two finite coloured graphs $\mathcal{G}_1=(G_1, i_1)$ and $\mathcal{G}_2=(G_2, i_2)$ with colour sets $I_1$ and $I_2$,
we may assume, up to changing the colours, 
that
$I_1 \cap I_2 =  \emptyset $. 
The procedure also takes as input two colours $c_1, c_2 \in \mathrm{Im} i_1 \sqcup \mathrm{Im} i_2$. The resulting glued graph, denoted as $Gl(\mathcal{G}_1, \mathcal{G}_2; c_1, c_2)$, is constructed as follows:
\begin{enumerate}
  \item Let $I_3$ be the disjoint union of $I_1$ and $I_2$ with $c_1$ and $c_2$ identified into a single colour which, by convention, we call $c_1$. We consider the disjoint union of $\mathcal{G}_1$ and $\mathcal{G}_2$, with the induced colouring on $I_3$.
  \item We then add any missing edges to the subgraph induced by $i^{-1}(c_1) \subseteq V({G}_1) \sqcup V({G}_2)$ until it forms a clique. The result graph is $Gl(\mathcal{G}_1, \mathcal{G}_2; c_1, c_2)$.
\end{enumerate}
\end{definition}

To indicate the difference from the previous identification, we refer to this gluing procedure as the N-gluing for short.
We give an example to explain the N-gluing.

\begin{example}
We consider the following two coloured graphs $\mathcal{G}_1$ (see Figure \ref{f2}) and $\mathcal{G}_2$ (see Figure \ref{f3}).
\vspace{1cm}

\begin{figure}[htbp]
\centering
\begin{minipage}[h]{0.48\textwidth}
\centering
\begin{tikzpicture}[scale = 1]
\draw[line width=1.2pt] (5,0) -- (5,1) -- (5,-1);
\draw[line width=1.2pt] (5,0) -- (4,0.5);
\draw[line width=1.2pt] (5,1) -- (4,0.5);
\draw[line width=1.2pt] (5,-1) -- (4,-0.5);
\draw[line width=1.2pt] (3.3,0) -- (4,-0.5);
\fill[blue](5,0) circle(0.6ex);
\fill[blue](5,1) circle(0.6ex);
\fill[blue](5,-1) circle(0.6ex);
\fill[brown](4,0.5) circle(0.6ex);
\fill[red](4,-0.5) circle(0.6ex);
\fill[red](3.3,0) circle(0.6ex);
\end{tikzpicture}
\caption{$\mathcal{G}_1$}\label{f2}
\end{minipage}
\begin{minipage}[h]{0.48\textwidth}
\centering
\begin{tikzpicture}[scale = 1]
\draw[line width=1.2pt] (5,0) -- (5,1) -- (5,-1);
\draw[line width=1.2pt] (5,0) -- (4,0);
\draw[line width=1.2pt] (5,-1) -- (4,-1);
\draw[line width=1.2pt] (5,0) -- (6,-0.5);
\draw[line width=1.2pt] (5,-1) -- (6,-0.5);

\fill(5,0) circle(0.6ex);
\fill(5,1) circle(0.6ex);
\fill(5,-1) circle(0.6ex);
\fill[green](4,0) circle(0.6ex);
\fill[green](4,-1) circle(0.6ex);
\fill[orange](6,-0.5) circle(0.6ex);
\end{tikzpicture}
\caption{$\mathcal{G}_2$}\label{f3}
\end{minipage}
\end{figure}

Then the coloured graph, obtained from the N-gluing with respect to the colours blue and green, is
given by Figure \ref{f4},
\begin{figure}[htbp]
\centering

\begin{tikzpicture}[scale = 1]
\draw[magenta, line width=1.2pt] (2,0) -- (4,0);
\draw[magenta, line width=1.2pt] (2,0) -- (4,-1);
\draw[magenta, line width=1.2pt] (1.5,1) -- (4,0);
\draw[magenta, line width=1.2pt] (1.5,1) -- (4,-1);
\draw[magenta, line width=1.2pt] (2,-1) -- (4,0);
\draw[magenta, line width=1.2pt] (2,-1) -- (4,-1);
\draw[magenta, line width=1.2pt] (4,0) -- (4,-1);
\draw[magenta,line width=1.2pt] (2,-1)  -- (1.5,1);

\draw[line width=1.2pt] (2,0)  -- (1.5,1);
\draw[line width=1.2pt] (2,0)  -- (2,-1);
\draw[line width=1.2pt] (2,0) -- (1,0);
\draw[line width=1.2pt] (1.5,1) -- (1,0);
\draw[line width=1.2pt] (2,-1) -- (1,-0.5);
\draw[line width=1.2pt] (0.3,0) -- (1,-0.5);
\fill[blue](2,0) circle(0.6ex);
\fill[blue](1.5,1) circle(0.6ex);
\fill[blue](2,-1) circle(0.6ex);
\fill[brown](1,0) circle(0.6ex);
\fill[red](1,-0.5) circle(0.6ex);
\fill[red](0.3,0) circle(0.6ex);

\draw[line width=1.2pt] (5,0) -- (5,1) -- (5,-1);
\draw[line width=1.2pt] (5,0) -- (4,0);
\draw[line width=1.2pt] (5,-1) -- (4,-1);
\draw[line width=1.2pt] (5,0) -- (6,-0.5);
\draw[line width=1.2pt] (5,-1) -- (6,-0.5);
\fill(5,0) circle(0.6ex);
\fill(5,1) circle(0.6ex);
\fill(5,-1) circle(0.6ex);
\fill[blue](4,0) circle(0.6ex);
\fill[blue](4,-1) circle(0.6ex);
\fill[orange](6,-0.5) circle(0.6ex);

\end{tikzpicture}
\caption{$Gl(\mathcal{G}_1, \mathcal{G}_2; \mathrm{blue}, \mathrm{green})$}\label{f4}
\end{figure}
where the added edges have been drawn in magenta.
\end{example}

The following two properties of the pre-Lorentzian graph play an important role in our proofs.

\begin{lem}[\rm\cite{BH2025}]\label{l1.1}
If the graph is obtained by N-gluing two Pre-Lorentzian graphs across the free vertices, 
then it is also Pre-Lorentzian.
\end{lem}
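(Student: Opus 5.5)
The plan is to realise N-gluing across free colours as an operation on Lorentzian polynomials, built from a product, an identification of variables, and one linear operator that preserves the Lorentzian property. Let $\mathcal{G}_1,\mathcal{G}_2$ be pre-Lorentzian with bound variables $x^{(1)},x^{(2)}$ and homogenising variables $y_1,y_2$. Let $c_1,c_2$ be the free colours we glue along, carried by single vertices $u\in V(G_1)$ and $v\in V(G_2)$ with variables $x_1,x_2$.

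\textbf{Step 1: decompose.} Since $c_1$ and $c_2$ are free, $C(\mathcal{G}_1)$ is affine in $x_1$ and $C(\mathcal{G}_2)$ is affine in $x_2$. So we can write $C(\mathcal{G}_1)=A_1+x_1B_1$ and $C(\mathcal{G}_2)=A_2+x_2B_2$, where $A_i,B_i$ do not involve $x_i$.

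In $Gl(\mathcal{G}_1,\mathcal{G}_2;c_1,c_2)$ the only new edge is $uv$, and $u,v$ now share the colour $c_1$ with variable $z$. An independent set contains at most one of $u,v$, hence
\begin{equation*}
C(Gl)=A_1A_2+z\,(B_1A_2+A_1B_2).
\end{equation*}
The graph is partitioned once the two bound colours are declared to be one colour $x$; the glued colour is then treated as in \cite{BH2025}. Thus $C(Gl)$ is obtained from $C(\mathcal{G}_1)C(\mathcal{G}_2)$ by deleting the $x_1x_2$-part and then setting $x_1=x_2=z$.

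\textbf{Step 2: Lorentzian product.} By hypothesis $(x^{(i)}y_i)^{k_i}H(C(\mathcal{G}_i))$ is Lorentzian for some $k_i$. By Proposition~\ref{p1.2} their product is Lorentzian, and by Proposition~\ref{p1.1} we may identify $x^{(1)}=x^{(2)}=x$ and $y_1=y_2=y$. This gives a Lorentzian polynomial
\begin{equation*}
P=(xy)^{k_1+k_2}H(C(\mathcal{G}_1))\,H(C(\mathcal{G}_2)).
\end{equation*}
As a polynomial in $x_1,x_2$, $P$ is multiaffine.

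\textbf{Step 3: the operator.} Define the linear operator $T$ on polynomials multiaffine in $x_1,x_2$ by
\begin{equation*}
T(1)=1,\qquad T(x_1)=T(x_2)=z,\qquad T(x_1x_2)=0,
\end{equation*}
extended linearly over coefficients in the remaining variables. $T$ maps homogeneous polynomials to homogeneous polynomials of the same degree (or to $0$). By construction, $T(P)$ equals $(xy)^{k_1+k_2}$ times a homogeneous polynomial whose dehomogenisation at $y=1$ is $C(Gl)$.

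I would show $T$ preserves Lorentzian polynomials using Br\"and\'en--Huh's symbol criterion \cite{BH2020}. The symbol is
\begin{equation*}
T\big((x_1+w_1)(x_2+w_2)\big)=w_1w_2+z(w_1+w_2),
\end{equation*}
a quadratic form with Hessian $\begin{pmatrix}0&1&1\\1&0&1\\1&1&0\end{pmatrix}$. This matrix has eigenvalues $2,-1,-1$, so exactly one is positive, and the support is $M$-convex. Hence the symbol is Lorentzian and $T(P)$ is Lorentzian.

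\textbf{Step 4: match the homogenisation (main obstacle).} It remains to check that $T(P)$ is exactly $(xy)^{K}H(C(Gl))$ for some $K$. The difficulty is that deleting the $x_1x_2B_1B_2$ term may lower the total degree of $C(Gl)$ relative to $\deg C(\mathcal{G}_1)+\deg C(\mathcal{G}_2)$. Then $T(P)$ carries a surplus power of $y$, equal to the degree drop $\delta$, beyond the minimal homogenisation.

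I would handle this by comparing degrees term by term. If $\delta>0$, I would show that $T(P)$ factors as $y^{\delta}\cdot(xy)^{K}H(C(Gl))$. To remove the extra $y^{\delta}$, I would apply $\partial_y^{\delta}$, which preserves the Lorentzian property and, with the $(xy)^K$ padding, only changes the polynomial by an absorbable positive constant; alternatively I would argue that the degree does not drop. This degree bookkeeping is the step I expect to need the most care.
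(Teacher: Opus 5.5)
\textbf{Verdict: the strategy is sound, but Step~4 has a genuine gap, and neither of your two proposed resolutions works.} The paper does not prove this lemma; it only cites it from \cite{BH2025}, so I am judging your argument on its own terms.

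\textbf{Steps 1--3 are fine.} The formula $C(Gl)=A_1A_2+z(B_1A_2+A_1B_2)$ is correct, and so is the symbol computation. Strictly, the full Br\"and\'en--Huh symbol is your quadratic $w_1w_2+z(w_1+w_2)$ multiplied by factors $\prod_j (x_j+w_j)^{\kappa_j}$ for the variables on which $T$ acts as the identity. That product is still Lorentzian by Proposition~\ref{p1.2}.

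\textbf{The degree does drop in general.} Put $\delta=\deg C(\mathcal{G}_1)+\deg C(\mathcal{G}_2)-\deg C(Gl)\in\{0,1\}$. Then $\delta=1$ exactly when every maximum independent set of $G_1$ contains $u$ and every maximum independent set of $G_2$ contains $v$. A concrete case: take each $\mathcal{G}_i$ to be a path on three vertices whose only free vertex is an endpoint. Each is pre-Lorentzian with $k=0$, since $H=y^2+2xy+x_1y+xx_1$ is Lorentzian. Gluing them gives $P_6$, with $\alpha=3<2+2$.

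\textbf{Differentiating does not remove the surplus factor.} $\partial_y^{\delta}$ does not act only on $y^{\delta}$. For $\delta=1$,
\[
\partial_y\big(x^{K}y^{K+1}H(C(Gl))\big)=x^Ky^K\big((K+1)H(C(Gl))+y\,\partial_yH(C(Gl))\big).
\]
$H(C(Gl))$ contains a pure power of $y$, coming from the constant term $1$, and also monomials free of $y$, coming from the top-degree part. So $y\,\partial_y H(C(Gl))$ is not proportional to $H(C(Gl))$. The result is Lorentzian, but it is not of the form $(xy)^{K'}H(C(Gl))$ that Definition~\ref{d1.10} requires.

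\textbf{The fix is to multiply, not differentiate.} The monomial $x^{\delta}$ is Lorentzian, so by Proposition~\ref{p1.2}
\[
x^{\delta}\,T(P)=x^{\delta}(xy)^{K}y^{\delta}H(C(Gl))=(xy)^{K+\delta}H(C(Gl)),\qquad K=k_1+k_2,
\]
is Lorentzian. This is the device of Remark~3.13 of \cite{BH2025}. The paper uses it in the proof of Theorem~\ref{t1.1} to discard the surplus factor $y^{jn}$ in $\overline{H}$.

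\textbf{A second, smaller gap: the glued colour.} After gluing, the colour class $\{u,v\}$ has two vertices, so it is not free. The glued graph is therefore partitioned only if $u$ and $v$ are given the bound colour, as in Definition~\ref{d1.1}; deferring this to ``as in \cite{BH2025}'' leaves it unresolved. You should substitute $z=x$ using Proposition~\ref{p1.1}. Because all coefficients are nonnegative, this substitution cannot lower the degree, so it commutes with homogenisation.

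With these two additions your argument is a complete proof.
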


\begin{lem}[\rm\cite{BH2025}]\label{l1.4}
If a partitioned graph is pre-Lorentzian, then its independence polynomial is log-concave.
\end{lem}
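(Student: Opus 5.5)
This statement is cited from Bendjeddou and Hardiman~\cite{BH2025} rather than proved here. Still, the natural route is short. Let $\mathcal{G}$ be pre-Lorentzian with bound variable $x$ and homogenising variable $y$, and write $d=\deg C(\mathcal{G})$. Choose $k$ such that
\[
P := (xy)^k H(C(\mathcal{G}))
\]
is Lorentzian. By Proposition~\ref{p1.1}, identifying variables preserves the Lorentzian property. So I would set every variable, including the bound variable $x$ and all free variables, equal to a single variable $z$, keeping $y$ separate. This gives a bivariate homogeneous Lorentzian polynomial
\[
Q(z,y) = z^k y^k \, H(I(G;\cdot))(z,y) = \sum_{j=0}^{d} i_j(G)\, z^{j+k} y^{d-j+k}.
\]
This is valid because identifying all colour variables in $C(\mathcal{G})$ turns it into $I(G;z)$, and homogenisation commutes with this identification, since the degree in the colour variables is just the size of the independent set.

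The second step uses the standard fact, from~\cite{BH2020}, about bivariate Lorentzian polynomials. A homogeneous bivariate polynomial $\sum_m c_m z^m y^{N-m}$ with nonnegative coefficients is Lorentzian if and only if the sequence $c_m/\binom{N}{m}$ is log-concave with no internal zeros. I would rederive it as follows. The $M$-convexity of the support gives no internal zeros. For the Hessian condition, take the $(N-2)$th derivative $\partial_z^{m-1}\partial_y^{N-m-1}$. Its Hessian is a $2\times 2$ matrix whose entries are constant multiples of $c_{m-1},c_m,c_{m+1}$, with factorial factors. Having at most one positive eigenvalue, together with nonnegative diagonal entries, forces the determinant to be $\le 0$. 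That inequality is exactly $c_m^2/\binom{N}{m}^2 \ge \frac{c_{m-1}}{\binom{N}{m-1}}\frac{c_{m+1}}{\binom{N}{m+1}}$.

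Applying this with $N=d+2k$ and $c_{j+k}=i_j(G)$, the sequence $i_j(G)/\binom{d+2k}{j+k}$ is log-concave. The last step is to pass from this ultra-log-concavity to ordinary log-concavity. The sequence $\binom{N}{j+k}$ is log-concave in $j$, and a termwise product of log-concave positive sequences is log-concave. Hence $i_j(G)$ is log-concave.

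Every step is short. The only point needing care is the Hessian computation, where the factorial weights must be tracked to get the exact binomial normalisation. For mere log-concavity, though, even a crude version of that inequality suffices.
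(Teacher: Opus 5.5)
Your proof is correct: identifying all colour variables to a single $z$ (Proposition~\ref{p1.1}) turns $(xy)^kH(C(\mathcal{G}))$ into the bivariate Lorentzian polynomial $\sum_j i_j(G)\,z^{j+k}y^{d-j+k}$, whose $2\times 2$ Hessian condition gives log-concavity of $i_j(G)/\binom{d+2k}{j+k}$, and multiplying back by the log-concave binomial sequence gives log-concavity of $(i_j(G))$. The paper gives no proof of this lemma and only cites \cite{BH2025}; your argument is the standard route behind that citation, namely that a bivariate Lorentzian polynomial has ultra-log-concave coefficients \cite{BH2020}, which in turn implies ordinary log-concavity.
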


\subsection{Lemmas}
\hspace*{\parindent}
Let $G=(V,E)$ be a simple graph and $v\in V$.
Denote by $N(v)=\{w\in V:vw\in E\}$ and $N[v]=N(v)\cup \{v\}$.
Note that the independence polynomials satisfy the following recurrence relations,
which will be used in calculating the coloured independence polynomials.
\begin{lem}[\cite{GH1983}]\label{l2.1}
Let $G=(V, E)$ be a simple graph. Then
\begin{enumerate}[(i)]
  \item $I(G; x) = I(G-v; x) + x I(G - N[v]; x)$, for any $v\in V(G)$;
  \item $I(G; x) = I(G-e; x) - x^{2}I(G - N(u) \cup N(v); x)$, for any $e=uv\in E(G)$.
\end{enumerate}
\end{lem}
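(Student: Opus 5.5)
We prove the two identities directly by comparing independent sets, treating (i) and (ii) separately.

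For (i), fix $v\in V$ and split the independent sets $S$ of $G$ by whether $v\in S$. Those with $v\notin S$ are exactly the independent sets of $G-v$, and they contribute $I(G-v;x)$. If $v\in S$, then $S\setminus\{v\}$ has no vertex of $N(v)$, so it is an independent set of $G-N[v]$. Conversely, for any independent set $T$ of $G-N[v]$, the set $T\cup\{v\}$ is independent in $G$. This gives a size-shifting bijection, so these sets contribute $x\,I(G-N[v];x)$. Adding the two parts gives (i).

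For (ii), let $e=uv$. Every independent set of $G$ is independent in $G-e$. The independent sets of $G-e$ that are not independent in $G$ are exactly those containing both $u$ and $v$. Such a set $S$ has the form $S=\{u,v\}\cup T$, where $T$ is independent and avoids $N(u)\cup N(v)$ in $G-e$. Since $u\in N(v)$ and $v\in N(u)$ in $G$, the set $N(u)\cup N(v)$ computed in $G$ contains $u$ and $v$. Hence $G-(N(u)\cup N(v))$ is exactly the graph on the vertices outside $N_{G-e}[u]\cup N_{G-e}[v]$. The map $T\mapsto T\cup\{u,v\}$ is a bijection raising size by $2$, so the excess is $x^2 I(G-N(u)\cup N(v);x)$. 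Therefore $I(G-e;x)=I(G;x)+x^2I(G-N(u)\cup N(v);x)$, which rearranges to (ii).

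The only point needing care is the neighbourhood bookkeeping in (ii): the neighbourhoods must be taken in $G$, so that $u$ and $v$ themselves are deleted. With that settled, both parts are routine.
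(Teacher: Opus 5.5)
Your proof is correct. The paper gives no proof of its own and simply cites \cite{GH1983}; your splitting of independent sets by whether they contain $v$ (resp.\ both $u$ and $v$) is the standard argument, and you rightly take the neighbourhoods in (ii) in $G$.

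One further point: your bijections $T\mapsto T\cup\{v\}$ and $T\mapsto T\cup\{u,v\}$ multiply the monomial weight $x_T$ by $x_v$ (resp.\ $x_ux_v$). So the same argument gives the coloured version $C(\mathcal{G})=C(\mathcal{G}-v)+x_{i(v)}\,C(\mathcal{G}-N[v])$. That is the form of (i) the paper actually applies in the proof of Theorem~\ref{t1.1}; part (ii) is never used there.
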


We use the following two lemmas to study the eigenvalues of the Hessian matrices.
Their proofs are based on the properties of matrices from linear algebra~\cite{BH2025}.

\begin{lem}[\rm\cite{BH2025}]\label{l1.2}
Let $M$ be a $(n+2)\times(n+2)$ symmetric matrix of the form

$$
M=\begin{pmatrix}
0        & a        & \cdots   & a        & b       & c        \\
a        & 0        & \ddots   & \vdots   & \vdots  & \vdots   \\
\vdots   & \ddots   & \ddots   & a        & b       & c        \\
a        & \cdots   & a        & 0        & b       & c        \\
b        & \cdots   & b        & b        & d       & f        \\
c        & \cdots   & c        & c        & f       & e        \\
\end{pmatrix},
$$
where $a$, $b$, $c$, $d$, $e$, $f$ are nonnegative real numbers. Then $M$ has $n-1$ nonpositive eigenvalues of the form $-a$ and the corresponding eigenspace is spanned by the vectors $(e_1 - e_i)_{2\leq i \leq n}$. Furthermore, any of the remaining three eigenvalues is either equal to $-a$ or is an eigenvalue of the matrix
$$
B =\begin{pmatrix}
(n-1)a      & b        & c     \\
nb          & d        & f      \\
nc          & f        & e     \\
\end{pmatrix}.
$$

\end{lem}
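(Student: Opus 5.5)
We will argue directly with explicit eigenvectors. Write $J$ for the $n\times n$ top-left block, which has $0$ on the diagonal and $a$ off the diagonal. Write $\mathbf{1}\in\mathbb{R}^n$ for the all-ones vector.

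\emph{The eigenvalue $-a$.} For $2\le i\le n$, consider the vector $u_i=e_1-e_i\in\mathbb{R}^{n+2}$, whose last two coordinates are zero. Its image under the last two rows of $M$ is $b-b=0$ and $c-c=0$. Under the top block we get $J(e_1-e_i)$. Its first coordinate is $0-a=-a$ and its $i$th coordinate is $a-0=a$. Every other coordinate is $a-a=0$. Hence $Mu_i=-a\,u_i$. The $u_i$ are linearly independent, so they span an $(n-1)$-dimensional eigenspace for the eigenvalue $-a\le 0$.

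\emph{The remaining three eigenvalues.} Since $M$ is symmetric, the orthogonal complement $W$ of $\mathrm{span}\{u_i\}$ is $M$-invariant. Now $W$ consists of the vectors whose first $n$ coordinates are all equal, so it has basis $w_1=(\mathbf{1},0,0)$, $w_2=e_{n+1}$, $w_3=e_{n+2}$, and $\dim W=3$. We compute $M$ on this basis:
\[
Mw_1=(n-1)a\,w_1+nb\,w_2+nc\,w_3,\qquad Mw_2=b\,w_1+d\,w_2+f\,w_3,\qquad Mw_3=c\,w_1+f\,w_2+e\,w_3 .
\]
For $Mw_1$, each of the top $n$ coordinates is $(n-1)a$, and the last two are $nb$ and $nc$. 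For $Mw_2$, the top coordinates are all $b$, so that part equals $b\,w_1$.

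Reading off coordinates gives the matrix of $M|_W$ with respect to $(w_1,w_2,w_3)$. Up to transposition, this matrix is exactly
\[
\begin{pmatrix}(n-1)a & b & c\\ nb & d & f\\ nc & f & e\end{pmatrix}=B.
\]
Transposition does not change eigenvalues, so the eigenvalues of $M|_W$ are exactly those of $B$. Because $\mathbb{R}^{n+2}=\mathrm{span}\{u_i\}\oplus W$ with both summands invariant, the spectrum of $M$ is $-a$ with multiplicity $n-1$, together with the three eigenvalues of $B$ counted with multiplicity. In particular, each of the remaining three eigenvalues is an eigenvalue of $B$, and possibly also equal to $-a$.

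The only point needing care is matching the non-symmetric $B$ with the restriction. The factors $n$ appear because $w_1$ is not normalised, so the matrix is a similarity-transformed version of a symmetric one. This is consistent, since all eigenvalues of $B$ are real.
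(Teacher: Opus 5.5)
Your proof is correct. The vectors $e_1-e_i$ are $(-a)$-eigenvectors, their orthogonal complement $W=\{v: v_1=\cdots=v_n\}$ is $M$-invariant, and the matrix of $M|_W$ in the basis $(\mathbf{1},0,0),e_{n+1},e_{n+2}$ is exactly $B$ (columns as images, so no transposition is needed). This even gives the slightly stronger conclusion that the spectrum of $M$ is $-a$ with multiplicity $n-1$ together with the full spectrum of $B$.

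The paper gives no proof of its own and simply cites Bendjeddou--Hardiman for this elementary linear algebra fact. Your invariant-subspace argument is the natural proof of it.
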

Within the context of this lemma, we call $B$ {\it the reduced form} of $M$.

\begin{lem}[\rm\cite{BH2025}]\label{l1.3}
Let $A$ be a $3 \times 3$ matrix which satisfies
\begin{equation}\label{e1.1}
  \mathrm{Tr}(A) > 0, A_{11}+ A_{22} + A_{33} \leq 0 \  and \ \mathrm{det}(A) \geq 0,
\end{equation}
where $A_{ii}$ is the minor of $A$ obtained by deleting  the $i$th row and the $i$th column. Then $A$ has exactly one positive eigenvalue.
\end{lem}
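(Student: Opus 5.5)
The plan is to argue entirely through the characteristic polynomial of $A$, so that symmetry of $A$ is never needed. This matters because the reduced form $B$ of Lemma~\ref{l1.2} is not symmetric. Write
$$\chi_A(t)=\det(tI-A)=t^3-\mathrm{Tr}(A)\,t^2+\big(A_{11}+A_{22}+A_{33}\big)\,t-\det(A),$$
which is the standard expansion: the coefficient of $t$ is the sum of the principal $2\times 2$ minors. Put $T=\mathrm{Tr}(A)$, $S=A_{11}+A_{22}+A_{33}$ and $D=\det(A)$. The hypotheses (\ref{e1.1}) then say $T>0$, $S\le 0$ and $D\ge 0$. The eigenvalues of $A$ are exactly the roots of $\chi_A$, counted with multiplicity, and "exactly one positive eigenvalue" will mean exactly one root in $(0,\infty)$ with multiplicity one.

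\emph{Uniqueness.} I will use Descartes' rule of signs. The coefficient sequence is $(1,-T,S,-D)$, with signs $+,-,\le 0,\le 0$. After discarding zero coefficients there is exactly one sign change, namely from $1$ to $-T$, and this holds whether or not $S$ or $D$ vanish. Hence $\chi_A$ has exactly one positive root counted with multiplicity. Descartes gives the count of positive roots up to an even number, and one sign change forces the count to be exactly $1$.

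\emph{Existence.} For readers who prefer a direct argument, existence also follows elementarily. If $D>0$, then $\chi_A(0)=-D<0$ and $\chi_A(t)\to+\infty$, so there is a root in $(0,\infty)$. If $D=0$, then $\chi_A(t)=t\,(t^2-Tt+S)$. The quadratic factor has real roots, since its discriminant is $T^2-4S>0$. The product of these roots is $S\le 0$ and their sum is $T>0$, so exactly one of them is positive.

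In either case the remaining two roots are non-positive reals or a non-real conjugate pair, so neither is a positive eigenvalue. In the intended application, $A=B$ is the reduced form of a real symmetric $M$, so all its eigenvalues are real anyway. The only point needing care is this bookkeeping about complex or repeated roots and the degenerate cases $S=0$ or $D=0$; the sign-change count handles all of these uniformly, and I expect no real obstacle.
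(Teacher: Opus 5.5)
Your proof is correct and complete. Writing $\chi_A(t)=t^3-Tt^2+St-D$, the hypotheses leave exactly one sign change in the coefficient sequence. Descartes' rule then gives exactly one positive root, necessarily simple, and the other two roots are either non-positive reals or a non-real conjugate pair. Your separate existence paragraph is correct but redundant.

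The paper itself gives no proof of this lemma; it is quoted from Bendjeddou--Hardiman. So there is no in-paper argument to compare against.

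The natural alternative is a Vieta argument on three real eigenvalues $\lambda_1\ge\lambda_2\ge\lambda_3$. Here $T>0$ forces $\lambda_1>0$. If also $\lambda_2>0$, then $D\ge 0$ forces $\lambda_3\ge 0$, whence $S=\lambda_1\lambda_2+\lambda_3(\lambda_1+\lambda_2)>0$, a contradiction.

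That route needs a real spectrum, which does hold in the application. The reduced form $B$ of Lemma~\ref{l1.2} is the matrix of $M$ on the invariant subspace spanned by $e_1+\cdots+e_n$, $e_{n+1}$, $e_{n+2}$. It is conjugate to a symmetric matrix by $\mathrm{diag}(\sqrt{n},1,1)$, which is what justifies your closing remark that its eigenvalues are real.

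Your Descartes argument avoids that detour entirely. It works for any real $3\times 3$ matrix, symmetric or not, and yields simplicity of the positive eigenvalue for free.
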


\section{Main results}
\hspace*{\parindent}
In this section,
we first define the coloured graph $\mathcal{F}(l,m,t,s)$ with only one free vertex.
And we construct the graph $\mathcal{F}_n(l,m,t,s)$ by identifying the center $c$ of $n$ copies of $\mathcal{F}(l,m,t,s)$.
Then we show that the graph $\mathcal{F}_n(l,m,t,s)$ is a pre-Lorentzian graph.
After N-gluing two copies of $\mathcal{F}(l,m,t,s)$,
we introduce the graph $\mathcal{G}_4(l,m,t,s)$.
Finally,
we define the operator $E_{\mathcal{G}_4(l,m,t,s)}$
and show that the independence polynomial of the image of $E_{\mathcal{G}_4(l,m,t,s)}$ is log-concave
by using the pre-Lorentzian property of $\mathcal{F}_n(l,m,t,s)$.
We always assume that $m,s,t,l \in \mathbb{N}$, $l\geq m \geq s$ and $l \geq t+m$ in the sequel.

\begin{definition}\label{d1.2.1}
Given the vertex $c$, the coloured vertex $x_i$ and four complete graphs $K_m$, $K_l$, $K_{m-s}$ and $K_{l-t}$,
define the coloured graph $\mathcal{F}(l,m,t,s)$ by adding $s$, $t$, $m-s$ and $l-t-m$ edges connecting the vertex $c$ with the complete graphs $K_m$, $K_l$, $K_{m-s}$, $K_{l-t}$, respectively,
and by adding $m$ and $l-m$ edges connecting the vertex $x_i $ with the complete graphs $K_m$, $K_l$, respectively
(see Figure~\ref{f1.2.1}).
\begin{figure}[htbp] 
\centering
\begin{tikzpicture}[scale = 0.8]
\draw[line width=1.2pt](-3,0) ellipse (2cm and 1cm) node {$K_{m-s}$};
\draw[line width=1.2pt](3,0) ellipse (2cm and 1cm)node {$K_{m}$};
\draw[line width=1.2pt](-3,-6) ellipse (2cm and 1cm)node {$K_{l-t}$};
\draw[line width=1.2pt](3,-6) ellipse (2cm and 1cm)node {$K_{l}$};

\fill(-3,-3) circle(0.6ex)node [left=3pt] {$c$};
\fill[blue](3,-3) circle(0.6ex)node [right=3pt] {$x_i$};
\fill(-3,-1) circle(0.6ex);
\fill(3,-1) circle(0.6ex);
\fill(-3,-5) circle(0.6ex);
\fill(3,-5) circle(0.6ex);
\fill(1,0) circle(0.6ex);
\fill(1,-6) circle(0.6ex);

\draw[line width=1.2pt] (-3,-1) -- (-3,-3)node[midway, left]{$m-s$};
\draw[line width=1.2pt] (-3,-5) -- (-3,-3)node[midway, left]{$l-t-m$};
\draw[line width=1.2pt] (3,-3) -- (-3,-3);
\draw[line width=1.2pt] (3,-3) -- (3,-1)node[midway, right]{$m$};
\draw[line width=1.2pt] (3,-3) -- (3,-5)node[midway, right]{$l-m$};
\draw[line width=1.2pt] (-3,-3) -- (1,0)node[midway, above]{$s$};
\draw[line width=1.2pt] (-3,-3) -- (1,-6)node[midway, below]{$t$};



\end{tikzpicture}
\caption{$\mathcal{F}(l,m,t,s)$
}\label{f1.2.1}
\end{figure}
\end{definition}

 \begin{remark}
We note that the added edges may be chosen arbitrarily among the vertices of the corresponding complete graphs in Definition~\ref{d1.2.1}.
All results hold for every such choice.
 \end{remark}
 
 In particular,
 $\mathcal{F}(1,0,0,0)$ and $\mathcal{F}(2,1,0,0)$ are trees (see Figures~\ref{f6-1} and~\ref{f6-2}, respectively).

\begin{figure}
\begin{center}
\begin{minipage}[h]{0.48\textwidth}
\centering
\setlength{\unitlength}{1cm}
\centering
\begin{tikzpicture}[scale = 1]
\draw[line width=1.2pt] (-4,0) -- (-1,0);
\draw[line width=1.2pt] (-4,0) -- (-4,-1.5);
\draw[line width=1.2pt] (-1,-1.5) -- (-1,0);

\fill(-4,0) circle(0.6ex)node [left=3pt] {$c$};
\fill[blue](-1,0) circle(0.6ex)node [right=3pt] {$x_i$};
\fill(-4,-1.5) circle(0.6ex);
\fill(-1,-1.5) circle(0.6ex);

\end{tikzpicture}
\caption{$\mathcal{F}(1,0,0,0)$}\label{f6-1}
\end{minipage}
\begin{minipage}[h]{0.48\textwidth}
\centering
\begin{tikzpicture}[scale = 1]
\draw[line width=1.2pt] (1,0) -- (4,0);
\draw[line width=1.2pt] (4,0) -- (4,-1)--(4,-2)--(4,1);
\draw[line width=1.2pt] (1,-1) -- (1,0)--(1,-2)--(1,1);

\fill(1,0) circle(0.6ex)node [left=3pt] {$c$};
\fill[blue](4,0) circle(0.6ex)node [right=3pt] {$x_i$};
\fill(4,-1) circle(0.6ex);
\fill(1,-1) circle(0.6ex);
\fill(4,-2) circle(0.6ex);
\fill(1,-2) circle(0.6ex);
\fill(4,1) circle(0.6ex);
\fill(1,1) circle(0.6ex);

\end{tikzpicture}
\caption{$\mathcal{F}(2,1,0,0)$}\label{f6-2}
\end{minipage}
\end{center}
\end{figure}

\begin{definition}\label{d1.2.2}
For any $n \in \mathbb{N}^*$, 
given the $n$ copies of $\mathcal{F}(l,m,t,s)$ with distinct free vertices $x_1,x_2,\ldots,x_n$,
define the coloured graph $\mathcal{F}_n(l,m,t,s)$ by identifying the vertex $c$ of $n$ copies of $\mathcal{F}(l,m,t,s)$ (see Figure~\ref{f1000} for $\mathcal{F}_n(1,0,0,0)$ and Figure~\ref{f6} for $\mathcal{F}_n(2,1,0,0)$ as examples).

\begin{figure}[htbp]
\centering
\begin{minipage}[h]{0.48\textwidth}
\centering
\begin{tikzpicture}[scale = 1]

\draw[line width=1.2pt] (-3.5,0) -- (-5,0);
\draw[line width=1.2pt] (-3.5,0) -- (-5,1);
\draw[line width=1.2pt] (-3.5,0) -- (-5,-2);
\draw[line width=1.2pt] (-3.5,0) -- (-2,0);
\draw[line width=1.2pt] (-3.5,0) -- (-2,1);
\draw[line width=1.2pt] (-3.5,0) -- (-2,-2);
\draw[line width=1.2pt] (-0.5,0) -- (-2,0);
\draw[line width=1.2pt] (-0.5,1) -- (-2,1);
\draw[line width=1.2pt] (-0.5,-2) -- (-2,-2);

\fill(-3.5,0) circle(0.6ex) node [above=5pt] {$c$};
\fill(-5,0) circle(0.6ex);
\fill(-5,1) circle(0.6ex);
\fill(-5,-2) circle(0.6ex);
\fill(-5,-0.5) circle(0.3ex);
\fill(-5,-1) circle(0.3ex);
\fill(-5,-1.5) circle(0.3ex);
\fill[red](-2,1) circle(0.6ex) node [above=5pt,right=3pt] {$x_1$};
\fill[green](-2,0) circle(0.6ex) node [above=5pt,right=3pt] {$x_2$};
\fill[blue](-2,-2) circle(0.6ex) node [below=5pt,right=3pt] {$x_n$};
\fill(-2,-0.5) circle(0.3ex);
\fill(-2,-1) circle(0.3ex);
\fill(-2,-1.5) circle(0.3ex);
\fill(-0.5,0) circle(0.6ex);
\fill(-0.5,1) circle(0.6ex);
\fill(-0.5,-2) circle(0.6ex);
\end{tikzpicture}
\caption{$ \mathcal{F}_n(1,0,0,0)$}\label{f1000}
\end{minipage}
\begin{minipage}[h]{0.48\textwidth}
\centering
\begin{tikzpicture}[scale = 1]

\draw[line width=1.2pt] (0.5,0) -- (-1,0);
\draw[line width=1.2pt] (0.5,0) -- (-1,1);
\draw[line width=1.2pt] (0.5,0) -- (-1,-2);
\draw[line width=1.2pt] (0.5,0) -- (-1,0.5);
\draw[line width=1.2pt] (0.5,0) -- (-1,1.5);
\draw[line width=1.2pt] (0.5,0) -- (-1,-2.5);

\draw[line width=1.2pt] (-2,0) -- (-1,0);
\draw[line width=1.2pt] (-2,1) -- (-1,1);

\draw[line width=1.2pt] (-2,-2.5) -- (-1,-2.5);

\draw[line width=1.2pt] (0.5,0) -- (2,0);
\draw[line width=1.2pt] (0.5,0) -- (2,1);
\draw[line width=1.2pt] (0.5,0) -- (2,-2);

\draw[line width=1.2pt] (3.5,0) -- (2,0);
\draw[line width=1.2pt] (3.5,1) -- (2,1);
\draw[line width=1.2pt] (3.5,-2) -- (2,-2);
\draw[line width=1.2pt] (3.5,0.5) -- (2,0);
\draw[line width=1.2pt] (3.5,1.5) -- (2,1);
\draw[line width=1.2pt] (3.5,-1.5) -- (2,-2);

\draw[line width=1.2pt] (3.5,0) -- (4.5,0);
\draw[line width=1.2pt] (3.5,1) -- (4.5,1);
\draw[line width=1.2pt] (3.5,-2) -- (4.5,-2);

\fill(0.5,0) circle(0.6ex) node [above=5pt] {$c$};

\fill(-1,0) circle(0.6ex);
\fill(-1,1) circle(0.6ex);
\fill(-1,-2) circle(0.6ex);
\fill(-1,0.5) circle(0.6ex);
\fill(-1,1.5) circle(0.6ex);
\fill(-1,-2.5) circle(0.6ex);

\fill(-1.5,-0.5) circle(0.3ex);
\fill(-1.5,-1) circle(0.3ex);
\fill(-1.5,-1.5) circle(0.3ex);

\fill(-2,0) circle(0.6ex);
\fill(-2,1) circle(0.6ex);

\fill(-2,-2.5) circle(0.6ex);

\fill[red](2,1) circle(0.6ex) node [above=10pt,right=3pt] {$x_1$};
\fill[green](2,0) circle(0.6ex) node [above=10pt,right=3pt] {$x_2$};
\fill[blue](2,-2) circle(0.6ex) node [below=5pt,right=3pt] {$x_n$};

\fill(2,-0.5) circle(0.3ex);
\fill(2,-1) circle(0.3ex);
\fill(2,-1.5) circle(0.3ex);

\fill(3.5,0) circle(0.6ex);
\fill(3.5,1) circle(0.6ex);
\fill(3.5,-2) circle(0.6ex);
\fill(3.5,0.5) circle(0.6ex);
\fill(3.5,1.5) circle(0.6ex);
\fill(3.5,-1.5) circle(0.6ex);

\fill(4.5,0) circle(0.6ex);
\fill(4.5,1) circle(0.6ex);
\fill(4.5,-2) circle(0.6ex);

\end{tikzpicture}
\caption{$ \mathcal{F}_n(2,1,0,0)$}\label{f6}
\end{minipage}
\end{figure}

\end{definition}

The bound vertices have been drawn in black and the vertex $c$ is called {\it the center}.

In order to prove the pre-Lorentzian property of $\mathcal{F}_n(l,m,t,s)$,
we need to show the Lorentzian property of the following multivariate polynomials $f(x_1,x_2,x,y)$.
\begin{lem}\label{p3.2}
Suppose that $a, b \in \mathbb{N}^*$ and $$f(x_1,x_2,x,y) = (ax + y)[(bx + x_1 + y)(bx + x_2 + y) + xy].$$ If $a=1$ or $a=b$, then 
 $f(x_1,x_2,x,y)$ is a Lorentzian polynomial. 
\end{lem}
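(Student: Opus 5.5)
The plan is to verify the two conditions of Definition~\ref{d1.7} for the cubic $f$ directly. Since $d=3$, the Hessian condition concerns the first partial derivatives $\partial_{x_1}f,\partial_{x_2}f,\partial_x f,\partial_y f$. One cannot simply factor $f=(ax+y)\,g$ and apply Proposition~\ref{p1.2}. A quick computation shows that the quadratic $g=(bx+x_1+y)(bx+x_2+y)+xy$ is \emph{not} Lorentzian. After removing the eigenvector $e_1-e_2$ via Lemma~\ref{l1.2}, its reduced $3\times 3$ Hessian has positive trace and determinant $-1$, so it has two positive eigenvalues. Hence the linear factor $ax+y$ must genuinely be used, and this is where the hypothesis $a=1$ or $a=b$ will enter.

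\emph{Support.} Expanding, $g$ contains every degree-$2$ monomial except $x_1^2$ and $x_2^2$, with positive coefficients. Multiplying by $ax+y$ then shows that
$$\mathrm{Supp}(f)=\{\alpha\in\mathbb{N}^4:\ |\alpha|=3,\ \alpha_1\le 1,\ \alpha_2\le 1\}.$$
Here $\alpha_1,\alpha_2$ are the exponents of $x_1,x_2$. The monomials $x_1x_2x$, $x_1x^2$, etc.\ all occur, and no cancellation is possible since all coefficients are nonnegative. This is the intersection of a box with a level of the simplex, which is $M$-convex. I will check the exchange axiom of Definition~\ref{d1.5} directly: given $\alpha_i>\beta_i$, one can move a unit to any coordinate $j$ with $\alpha_j<\beta_j$, and since $\beta_j\le 1$ when $j\in\{1,2\}$, the box constraint is preserved.

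\emph{Derivatives in $x_1,x_2$.} We have $\partial_{x_1}f=(ax+y)(bx+x_2+y)$, a product of two linear forms with nonnegative coefficients. It is Lorentzian by Proposition~\ref{p1.2}, so its Hessian has at most one positive eigenvalue. The case $\partial_{x_2}f$ is symmetric.

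\emph{Derivatives in $x$ and $y$.} This is the main step. Both $\partial_x f$ and $\partial_y f$ are quadratics symmetric in $x_1,x_2$, and the $x_1x_2$ coefficient is the only one involving both of these variables with the $x_1^2,x_2^2$ coefficients equal to $0$. Hence their constant Hessians in the variable order $(x_1,x_2,x,y)$ have exactly the shape of Lemma~\ref{l1.2} with $n=2$, with nonnegative entries written explicitly in terms of $a,b$. Lemma~\ref{l1.2} gives the eigenvalue $-a'\le 0$ on $e_1-e_2$, where $a'$ is the $x_1x_2$-entry, and reduces the problem to the $3\times3$ matrix $B$. For each of the two $B$'s I will compute $\mathrm{Tr}(B)$, the sum of principal $2\times2$ minors, and $\det B$ as polynomials in $a,b$, and then invoke Lemma~\ref{l1.3}.

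The expected obstacle is the sign of the minor sum and especially of $\det B$. These should factor into expressions like $-(a-1)(a-b)\cdot(\text{positive})$ plus a clearly signed remainder. That is why exactly $a=1$ or $a=b$ is required. I would substitute $a=1$ and $a=b$ separately to simplify the computation. One could also note that the remaining eigenvalue of $B$ equal to $-a'$ is harmless. This yields at most one positive eigenvalue for every first derivative, completing the verification.
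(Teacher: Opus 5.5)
Your proposal follows the paper's proof. You check the support, dispatch $\partial_{x_1}f,\partial_{x_2}f$ as products of linear forms, and verify the reduced Hessians of $\partial_x f,\partial_y f$ from Lemma~\ref{l1.2} against Lemma~\ref{l1.3} at $a=1$ and $a=b$; the one difference is the support, where the paper uses $\mathrm{Supp}(f)=\mathrm{Supp}\big((ax+y)(bx+x_1+y)(bx+x_2+y)\big)$ instead of your box-and-simplex exchange check. Your guess about the determinants is wrong but harmless: they come out as $4a^2(b^2-a)$ for $\partial_x f$ and $4(a-1)$ for $\partial_y f$, with negative minor sums for all $a,b\ge 1$, so the hypotheses are sufficient but not ``exactly required'' (the same check works whenever $a\le b^2$), and at $a=1$ or $a=b$ all signs are as needed.
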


\begin{proof}
 For any $a, b \in \mathbb{N}^*$, 
 the support
\begin{align*} \mathrm{Supp}(f(x_1,x_2,x,y))&= \mathrm{Supp}((ax + y)[(bx + x_1 + y)(bx + x_2 + y) + xy])\\
&=\mathrm{Supp}( (ax + y)(bx + x_1 + y)(bx + x_2 + y)+ax^2y+axy^2)\\
&=\mathrm{Supp}( (ax + y)(bx + x_1 + y)(bx + x_2 + y)).
\end{align*}
By Proposition~\ref{p1.2} and the Lorentzian property of the multivariate linear polynomials,
we obtain that $(ax + y)(bx + x_1 + y)(bx + x_2 + y)$ is a Lorentzian polynomial.
Thus $\mathrm{Supp}(f(x_1,x_2,x,y))$ is M-convex. 

Note that $$\partial_{x_1}f(x_1,x_2,x,y)=(ax+y)(bx+x_2+y)$$ and $$\partial_{x_2}f(x_1,x_2,x,y)=(ax+y)(bx+x_1+y)$$ are products of Lorentzian polynomials.
So it suffices to show that the Hessian matrices $H_{\partial_{x}f}$ and $H_{\partial_{y}f}$ associated
to $\partial_{x}f(x_1,x_2,x,y)$ and $\partial_{y}f(x_1,x_2,x,y)$ have only one positive eigenvalue, respectively. 
 
(i) For $a=1$, we obtain the reduced form of the Hessian matrix $H_{\partial_{x}f}$ as follows by Lemma~\ref{l1.2}.
\begin{align*}
H_{\partial_{x}f}=\begin{pmatrix}
1        & 2b        & b+1      \\
\        & \        &  \      \\
4b   & 6b^2   & 2+4b+2b^2         \\
\        & \        &  \      \\
2b+2        & 2+4b+2b^2   & 4+4b             \\
\end{pmatrix}.
\end{align*}
 
 Obviously, $\mathrm{Tr}(H_{\partial_{x}f})$ is positive and
\begin{align*}
 H_{11}+H_{22}+H_{33} = -4b^4 + 8b^3 -4b^2 - 16b -2 < 0, \quad {\rm for}\ b \in \mathbb{N}^*
\end{align*}
and
\begin{align*}
 \mathrm{det}(H_{\partial_{x}f})= 4(b^2-1) \geq 0, \quad {\rm for}\ b \in \mathbb{N}^*. 
 \end{align*}
Thus the Hessian matrix $H_{\partial_{x}f}$ has exactly one positive eigenvalue by Lemma~\ref{l1.3}.

The reduced form of the Hessian matrix $H_{\partial_{y}f}$ is
\begin{align*}
H_{\partial_{y}f}=\begin{pmatrix}
1        & b+1        & 2      \\
\        & \        &  \      \\
2b+2   & 2(b+1)^2   & 4b+4         \\
\        & \        &  \      \\
4        & 4b+4   & 6            \\
\end{pmatrix}.
\end{align*}
 
 Obviously, $\mathrm{det}(H_{\partial_{y}f})=0 $, $\mathrm{Tr}(H_{\partial_{y}f})$ is positive and
\begin{align*}
 H_{11}+H_{22}+H_{33} = -4b^2-8b-6 < 0, \quad {\rm for}\ b \in \mathbb{N}^*.
\end{align*}

Thus $H_{\partial_{y}f}$ has exactly one positive eigenvalue by Lemma~\ref{l1.3}.

(ii)  For $a=b$, we obtain the reduced form of the Hessian matrix $H_{\partial_{x}f}$ as follows by Lemma~\ref{l1.2}.
\begin{align*}
H_{\partial_{x}f}=\begin{pmatrix}
b        & 2b^2        & 2b      \\
\        & \        &  \      \\
4b^2   & 6b^3   & 2b+6b^2         \\
\        & \        &  \      \\
4b        & 2b+6b^2   & 6b+2            \\
\end{pmatrix}.
\end{align*}
 
 Obviously, $\mathrm{Tr}(H_{\partial_{x}f})$ is positive and
\begin{align*}
  H_{11}+H_{22}+H_{33} = -2b^4-12b^3-6b^2+2b < 0, \quad  {\rm for}\ b \in \mathbb{N}^*
\end{align*}
and
\begin{align*}
 \mathrm{det}(H_{\partial_{x}f})= 4b^3(b-1) \geq 0, \quad   {\rm for}\ b \in \mathbb{N}^*. 
 \end{align*}
Thus $H_{\partial_{x}f}$ has exactly one positive eigenvalue by Lemma~\ref{l1.3}.

The reduced form of the Hessian matrix $H_{\partial_{y}f}$ is
\begin{align*}
H_{\partial_{y}f}=\begin{pmatrix}
1        & 2b        & 2      \\
\        & \        &  \      \\
4b   & 6b^2+2b    & 6b+2         \\
\        & \        &  \      \\
4        & 6b+2   & 6            \\
\end{pmatrix}.
\end{align*}
 
 Obviously, $\mathrm{Tr}(H_{\partial_{y}f})$ is positive and
\begin{align*}
  H_{11}+H_{22}+H_{33} = -2b^2-10b-6 < 0, \quad {\rm for}\ b \in \mathbb{N}^*
\end{align*}
and
\begin{align*}
 \mathrm{det}(H_{\partial_{y}f})= 4(b-1) \geq 0, \quad {\rm for}\ b \in \mathbb{N}^*. 
 \end{align*}
Thus $H_{\partial_{y}f}$ has exactly one positive eigenvalue by Lemma~\ref{l1.3}.
Therefore,
the multivariate polynomial $f(x_1,x_2,x,y)$ is a Lorentzian polynomial by the definition.
This completes the proof.

\end{proof}
\begin{remark}
The hypotheses of Lemma~\ref{p3.2} are sufficient but not necessary, and the exact region of validity appears to be complicated. For example, numerical experiments suggest that the Hessian of $\partial_xf$
is Lorentzian for $(a,b)=(3,2)$ and $(4,2)$,
 but not for $(2,1)$ or $(5,2)$. Thus, the conditions $a=1$ and $a=b$ are not expected to be optimal.
\end{remark}

\begin{definition}\label{d3.1}
Let $\mathcal{J}(l,m)$ be a coloured graph, obtained by adding $m$ and $l-m$ edges connecting the free vertex $x_i $ and the complete graphs $K_m$ and $K_l$, respectively (see Figure~\ref{f15}). 
\end{definition}

\begin{figure}[htbp] 
\centering
\begin{tikzpicture}[scale = 0.8]

\draw[line width=1.2pt](-3,0) ellipse (2cm and 1cm)node {$K_{m}$};

\draw[line width=1.2pt](6,0) ellipse (2cm and 1cm)node {$K_{l}$};

\fill(4,0) circle(0.6ex);

\fill(-1,0) circle(0.6ex);

\draw[line width=1.2pt] (-1,0) -- (4,0);

\fill[blue](1.5,0) circle(0.6ex)node [above=3pt] {$x_i$};

\end{tikzpicture}
\caption{$\mathcal{J}(l,m)$
}\label{f15}
\end{figure}

\newpage

Now we prove the pre-Lorentzian property of the coloured graph $\mathcal{F}_n(l,m,t,s)$.

\begin{thm}\label{t1.1}
Suppose that $n\in \mathbb{N}^*$, $m,s,t,l\in \mathbb{N}$, $l\geq m \geq s$ and $l \geq t+m$. 
Then we have the following.
\begin{enumerate}[(i)]
  \item If $m=1$, $m=s+1$ or $l=t+1$, then $\mathcal{F}_n(l,m,t,s)$ is a pre-Lorentzian graph.
  \item If $l\ge 1$ and $t=0$, then $\mathcal{F}_n(l,m,t,s)$ is a pre-Lorentzian graph.
\end{enumerate}

\end{thm}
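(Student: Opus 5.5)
The plan is to compute the coloured independence polynomial of $\mathcal{F}_n(l,m,t,s)$ explicitly, show that after homogenisation it factors as a product of linear forms times one polynomial of the shape in Lemma~\ref{p3.2}, generalised from $2$ to $n$ free variables, and then check the Lorentzian property of that factor. We colour all black vertices with the bound colour $x$ and the $n$ vertices $x_i$ with distinct free colours.

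\textbf{Step 1: the coloured independence polynomial.} Each complete graph contributes at most one vertex to an independent set. We condition on the centre $c$, using Lemma~\ref{l2.1}(i).
\begin{itemize}
\item If $c\notin S$, each copy contributes $(1+(m-s)x)(1+(l-t)x)$ from $K_{m-s},K_{l-t}$. It also contributes $(1+mx)(1+lx)$ if $x_i\notin S$, and $x_i(1+mx)$ if $x_i\in S$, because $x_i$ kills all of $K_m$ and $l-m$ vertices of $K_l$.
\item If $c\in S$, every $x_i$ is excluded, and each copy contributes $(1+(m-s)x)$ from $K_m$, $(1+(l-t)x)$ from $K_l$, $(1+mx)$ from $K_{l-t}$, and $1$ from $K_{m-s}$.
\end{itemize}
Writing $P(x)=(1+mx)(1+(m-s)x)(1+(l-t)x)$, the common factor comes out and
$$C(\mathcal{F}_n)=P(x)^n\Big[x+\prod_{i=1}^n(1+lx+x_i)\Big],\qquad H(C(\mathcal{F}_n))=\widetilde P^{\,n}\Big[xy^{n-1}+\prod_{i=1}^n(lx+x_i+y)\Big],$$
where $\widetilde P$ is the product of the homogenised linear forms $(mx+y),((m-s)x+y),((l-t)x+y)$. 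Any factor with zero coefficient of $x$ is just $y$.

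\textbf{Step 2: extracting the factor.} In case (i), at least one of the three linear factors is $x+y$, since $m=1$, $m-s=1$ or $l-t=1$. In case (ii), $t=0$ gives the factor $lx+y$ with $l\ge1$. So in both cases
$$H(C(\mathcal{F}_n))=L(x,y)\cdot f_n,\qquad f_n=(ax+y)\Big[\prod_{i=1}^n(bx+x_i+y)+xy^{n-1}\Big],$$
with $b=l$, $a\in\{1,b\}$, and $L$ a product of linear forms with nonnegative coefficients. Linear forms are Lorentzian, so by Proposition~\ref{p1.2} it suffices to show $f_n$ is Lorentzian, and then $k=0$ works in Definition~\ref{d1.10}. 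For $n=1$, $f_1$ is a product of linear forms; for $n=2$ this is exactly Lemma~\ref{p3.2}.

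\textbf{Step 3: $f_n$ is Lorentzian for general $n$.}
\begin{itemize}
\item \emph{Support.} $\mathrm{Supp}(f_n)=\mathrm{Supp}\big((ax+y)\prod_i(bx+x_i+y)\big)$, because the extra monomials $x^2y^{n-1}$ and $xy^n$ already occur in the product. The product is Lorentzian, so this support is $M$-convex.
\item \emph{Derivatives hitting a free variable.} $f_n$ has degree $n+1$, so we need $(n-1)$th partial derivatives. Any derivative involving some $x_i$ kills the $xy^{n-1}$ term, since $\partial_{x_i}f_n=(ax+y)\prod_{j\ne i}(bx+x_j+y)$. Further derivatives of such a product of linear forms remain Lorentzian.
\item \emph{Derivatives in $x,y$ only.} What remains is $\partial_x^p\partial_y^q f_n$ with $p+q=n-1$. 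Each is a quadratic form symmetric in $x_1,\dots,x_n$, and its Hessian has the block shape of Lemma~\ref{l1.2} with the $n$ free variables first. For these I compute the $3\times3$ reduced form $B$ as polynomials in $n,p,b$ and verify the three conditions of Lemma~\ref{l1.3}: $\mathrm{Tr}(B)>0$, $B_{11}+B_{22}+B_{33}\le0$, $\det B\ge0$.
\end{itemize}

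\textbf{Main obstacle.} The hardest step is this last verification uniformly in $p$ and $n$. The term $xy^{n-1}$ survives only when $p\le1$, so for $p\ge2$ the quadratic is again a derivative of a product of linear forms and is automatically Lorentzian. Only $p=0$ and $p=1$ (i.e. $q=n-1$ and $q=n-2$) require the explicit sign computations, now with $n$-dependent entries coming from elementary symmetric expansions of $\prod(bx+x_i+y)$. I expect the same factorisations as in Lemma~\ref{p3.2} to appear, such as $\det\propto b^2-1$ when $a=1$ and $\det\propto b-1$ when $a=b$, with positive coefficients depending on $n$. The one-positive-eigenvalue conclusion then follows from Lemma~\ref{l1.3}.
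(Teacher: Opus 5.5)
Your Steps 1 and 2 match the paper's computation. Step 3 is false for $n\ge 3$: the factor $f_n=(ax+y)\bigl[\prod_{i=1}^n(bx+x_i+y)+xy^{n-1}\bigr]$ is in general not Lorentzian, so the plan of taking $k=0$ cannot be completed.

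The concrete error is the claim that the perturbation survives only for $p\le 1$. After multiplying by $(ax+y)$ the perturbation is $ax^2y^{n-1}+xy^n$, and $ax^2y^{n-1}$ survives $\partial_x^2\partial_y^{n-3}$. So the case $p=2$ is not a derivative of a product of linear forms.

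This actually breaks the argument, not just the bookkeeping. Take $n=3$, $a=b=1$, i.e.\ $l=1$; this covers e.g.\ $\mathcal{F}_3(1,0,0,0)$, the building block of the $W_4$ case. With $u=x+y$ and $e_k$ the elementary symmetric polynomials in $x_1,x_2,x_3$,
\[
\partial_x^2 f_3=12u^2+6e_1u+2e_2+2y^2 .
\]
On the plane $x_1=x_2=x_3=s$, $x=0$, $y=r$ this equals $14r^2+18rs+6s^2$, which is positive definite. So the Hessian has two positive eigenvalues.

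Your $p=1$ case fails as well: on the same plane $\partial_x\partial_y f_3$ restricts to $15r^2+18rs+6s^2$. More generally, on that plane
\[
\partial_x^2\partial_y^{n-3}f_n/(n-1)!=ay^2+nb^2yz+\bigl(\tfrac{n(n-3)}{2}b^2+nab\bigr)z^2, \qquad z=y+s .
\]
This form is positive definite whenever $4a^2+2a(n-3)b>nb^3$, for instance $a=b=1$ and every $n\ge 3$. For $(a,b)=(1,2)$ and $(2,2)$ at $n=3$, the reduced $3\times 3$ form of $\partial_x^2 f_3$ has negative determinant and positive trace, hence again two positive eigenvalues. So the sign conditions of Lemma~\ref{l1.3} you hoped to verify are false, and the anticipated factorisations of the determinant do not occur.

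The missing idea is the padding allowed in Definition~\ref{d1.10}. For $n>2$ the paper does not use the linear factor $(ax+y)$ at all. It proves instead that $(xy)^k\bigl[xy^{n-1}+\prod_{i=1}^n(lx+x_i+y)\bigr]$ is Lorentzian for $k$ large.

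In that polynomial the perturbation is $x^{k+1}y^{n+k-1}$. It survives $\partial_x^q\partial_y^{n+2k-2-q}$ only for $q\in\{k-1,k,k+1\}$. In those three cases the conditions of Lemma~\ref{l1.3} come from the leading $k^4$ terms of $H_{11}$ and $\det H$. The large power of $xy$ is what dilutes the perturbation; a single factor $(ax+y)$ suffices only for $n=2$, which is exactly Lemma~\ref{p3.2}.

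A minor point: a degenerate factor $0\cdot x+1$ homogenises to $1$, not $y$. Writing it as $y$ gives the paper's $\overline{H}=y^{jn}H(C)$, and passing from $\overline{H}$ back to $H(C)$ needs a separate remark. In your argument you can simply drop such factors.
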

\begin{proof}
Before proceeding with the proof, we note that under either condition (i) or condition (ii), we have 
$l\ge 1$.
When we select the vertices $c$ and $x_i$ successively in Lemma~\ref{l2.1} (i),
we obtain the following specific expression for the coloured independence polynomial $C( \mathcal{F}_n(l,m,t,s);x,(x_i))$.
\begin{align*}
   &C( \mathcal{F}_n(l,m,t,s);x,(x_i)) \\
   =& C( \mathcal{F}_n(l,m,t,s)- c;x,(x_i))+ x C( \mathcal{F}_n(l,m,t,s)- N[c];x,(x_i))\\
   =& I(K_{m-s};x)^n I(K_{l-t};x)^n \prod^n_{i=1}C(\mathcal{J}(l,m);x,(x_i)) + x I(K_{m};x)^{n} I(K_{m-s};x)^{n} I(K_{l-t};x)^{n}\\
   =&I(K_{m-s};x)^n I(K_{l-t};x)^n \prod^n_{i=1}(C(\mathcal{J}(l,m) - x_i;x,(x_i))+xC(\mathcal{J}(l,m) - N[x_i];x,(x_i)) )\\
    & + x I(K_{m};x)^{n} I(K_{m-s};x)^{n} I(K_{l-t};x)^{n}\\
   =& I(K_{m-s};x)^n I(K_{l-t};x)^n \Bigg [\prod^n_{i=1}(I(K_m;x) I(K_{l};x)+x_i I(K_{m};x) ) +x I(K_{m};x)^{n}\Bigg]\\
   =& I(K_{m-s};x)^n I(K_{l-t};x)^n I(K_m;x)^n \Bigg [\prod^n_{i=1} (I(K_{l};x)+x_i ) +x\Bigg]\\
   =& [(m-s)x+1]^n [(l-t)x+1]^n (mx+1)^n \Bigg [\prod^n_{i=1} (lx +x_i +1 ) +x\Bigg],
\end{align*}
where $x$ is the variable associated to the bound colour and $x_i$ are the variables associated to
 the free colours.

In order to prove that $\mathcal{F}_n(l,m,t,s)$ is a pre-Lorentzian graph, 
we need to show that there exists a $k\in\mathbb{N}$,
such that $(xy)^k H(C( \mathcal{F}_n(l,m,t,s);x,(x_i)))$ is a Lorentzian polynomial under the condition (i) or (ii).
Denote by
\begin{align}\label{recu-1}
  \overline{H}(C( \mathcal{F}_n(l,m,t,s);x,(x_i))) =  [(m-s)x+y]^n [(l-t)x+y]^n (mx+y)^n \Bigg [\prod^n_{i=1} (lx +x_i +y ) +xy^{n-1}\Bigg].
\end{align}
Obviously,
if $m=0$, $m=s$ or $l=t$,
we have
$$ \overline{H}(C( \mathcal{F}_n(l,m,t,s);x,(x_i))) =y^{jn}H(C( \mathcal{F}_n(l,m,t,s);x,(x_i))),$$
where $j\in\{0,1,2,3\}$ is the number of degenerate factors among $m, m-s$ and $l-t$.
Otherwise, we have
$$ \overline{H}(C( \mathcal{F}_n(l,m,t,s);x,(x_i))) =H(C( \mathcal{F}_n(l,m,t,s);x,(x_i))).$$
As stated by Bendjeddou and Hardiman~\cite[Remark 3.13]{BH2025},
it suffices to prove that there exists a $k\in\mathbb{N}$,
such that $(xy)^k  \overline{H}(C( \mathcal{F}_n(l,m,t,s);x,(x_i)))$ is a Lorentzian polynomial under the condition (i) or (ii).

For $n=1$, since 
$$ \overline{H}(C( \mathcal{F}_1(l,m,t,s);x,(x_i)))= [(m-s)x+y] [(l-t)x+y] (mx+y) [(l+1)x +x_1 +y ]$$ is a product of Lorentzian polynomials, $ \overline{H}(C( \mathcal{F}_1(l,m,t,s);x,(x_i)))$ is a Lorentzian polynomial by Proposition~\ref{p1.2}.

For $n=2$, we have
$$ \overline{H}(C( \mathcal{F}_2(l,m,t,s);x,(x_i)))=[(m-s)x+y]^2 [(l-t)x+y]^2 (mx+y)^2  [ (lx +x_1 +y ) (lx +x_2 +y ) +xy].$$ 
By Lemma~\ref{p3.2},
we obtain that 
\begin{itemize}
\item if $m=1$,
then the multivariate polynomial $$(mx+y) [ (lx +x_1 +y ) (lx +x_2 +y ) +xy]$$ is a Lorentzian polynomial.
\item if $m=s+1$,
then the multivariate polynomial $$[(m-s)x+y] [ (lx +x_1 +y ) (lx +x_2 +y ) +xy]$$ is a Lorentzian polynomial.
\item if $l=t+1$,
then the multivariate polynomial $$[(l-t)x+y] [ (lx +x_1 +y ) (lx +x_2 +y ) +xy]$$ is a Lorentzian polynomial.
\item if $l\ge 1$ and $t=0$,
then the multivariate polynomial $$[(l-t)x+y] [ (lx +x_1 +y ) (lx +x_2 +y ) +xy]$$ is a Lorentzian polynomial.
\end{itemize}
Thus $ \overline{H}(C( \mathcal{F}_2(l,m,t,s);x,(x_i)))$ is a Lorentzian polynomial by Proposition~\ref{p1.2}.

For $n>2$, since $[(m-s)x+1]^n [(l-t)x+1]^n (mx+1)^n$ is  a Lorentzian polynomial, it suffices to show that

\begin{align*}
 h(x,y, (x_i)) =  xy^{n-1} + \prod^n_{i=1}(lx + x_i + y )
\end{align*}
 is  a Lorentzian polynomial after being multiplied by $(xy)^k$. 
By the definition of Lorentzian polynomials,
we first prove that Supp$( h(x,y, (x_i)))$ is M-convex. 
Note that
\begin{align*}
 \mathrm{Supp}(h(x,y, (x_i)))
 =& \mathrm{Supp}(xy^{n-1} + \prod^n_{i=1}( lx +x_i + y ))\\
 =& \mathrm{Supp}( \prod^n_{i=1}( lx +x_i + y )).
\end{align*}
Obviously, $ \prod^n_{i=1}(x_i + x + y )$ is a product of Lorentzian polynomials.
Thus Supp$(h(x,y, (x_i)))$ is M-convex.

The remainder of the proof is to show that,
for all sufficiently large $k$, 
every $(n+2k-2)$th partial derivative of
\begin{align}\label{e1.2}
   p(x,y, (x_i)):= (xy)^k \left[ xy^{n-1} + \prod^n_{i=1}( lx +x_i + y ) \right]
\end{align}
has a Hessian matrix with at most one positive eigenvalue. 
For any $j \in [n]$, we have
\begin{align*}
  \partial_{x_j} p(x,y, (x_i))=(xy)^k  \prod^n_{i \neq j}(lx +x_i +  y ).
\end{align*}
The right-hand side is a product of Lorentzian polynomials.
Hence $ \partial_{x_j} p(x,y, (x_i))$ is  Lorentzian. 
Next it suffices to consider that the partial derivatives of the form
\begin{align*}
  \partial_{x}^{q}  \partial_{y}^{n+2k-q-2} p(x,y, (x_i))
\end{align*}
have the Hessian matrix with at most one positive eigenvalue for sufficiently large $k$. 
We proceed by considering the following four cases based on the value of $q$.

(1) For $q < k-1 $ or $q > k+1 $, 
this case will eliminate the terms $x^{k+1}y^{n+k-1}$ in $p(x,y, (x_i))$, 
the remaining terms are of the form $(xy)^k  \prod^n_{i =1}( lx + x_i +y )$, 
which is a product of Lorentzian polynomials. 
Thus $\partial_{x}^{q}  \partial_{y}^{n+2k-q-2} p(x,y, (x_i))$ is a Lorentzian polynomial.

(2) For $q=k-1$, 
in this case,
the remaining terms are those for which the degree of $x$ is at least $k-1$,
and the degree of $y$ is at least $n+k-1$.
There are the following six terms
\begin{align*}
   x^{k-1}y^{n+k-1}x_i x_j,\quad  x^{k}y^{n+k-1}x_i,\quad  x^{k-1}y^{n+k}x_i,\quad
   x^{k+1}y^{n+k-1},\quad   x^{k-1}y^{n+k+1},\ \mathrm{and}  \  x^{k}y^{n+k}.
\end{align*}
And their coefficients are
 \begin{align*}
   0,\quad 1,\quad  0,\quad
   ln+1,\quad   0,\ \mathrm{and}  \ 1,
\end{align*}
respectively.

After differentiating, we have
 \begin{align*}
   \partial_{x}^{k-1}  \partial_{y}^{n+k-1} p(x,y, (x_i))= (k-1)!(n+k-1)!\bigg (\sum_{i =1}^n k x_i x + (ln+1)(k+1)k\frac{x^2}{2} + k(n+k)xy \bigg ).
 \end{align*}

By removing the common factor $(k-1)!(n+k-1)!$,
we have the following reduced form of the Hessian matrix by Lemma~\ref{l1.2}.

 \begin{align*}
H =\begin{pmatrix}
0      & k        & 0     \\
                 &                             &                   \\
nk         & (ln+1)(k+1)k        & k(n+k)      \\
                 &                             &                   \\
0          & k(n+k)        & 0     \\
\end{pmatrix}.
\end{align*}

Obviously, $\mathrm{det}(H)=0$, $\mathrm{Tr}(H)$ is positive and
\begin{align*}
  H_{11}+H_{22}+H_{33} = -k^2(n+k)^2 - nk^2 < 0, \quad \mathrm{for}  \ n >2.
\end{align*}
Thus $H$ has exactly one positive eigenvalue by Lemma \ref{l1.3}.

(3) For $q=k$, 
in this case,
the remaining terms are those for which the degree of $x$ is at least $k$,
and the degree of $y$ is at least $n+k-2$.
There are the following six terms
\begin{align*}
  x^{k}y^{n+k-2}x_ix_j,\quad  x^{k+1}y^{n+k-2}x_i,\quad  x^{k}y^{n+k-1}x_i,\quad
   x^{k+2}y^{n+k-2},\quad   x^{k}y^{n+k},\ \mathrm{and}  \  x^{k+1}y^{n+k-1}.
\end{align*}

And their coefficients are
 \begin{align*}
  1,\quad  l(n-1),\quad  1,\quad
  l^2 \binom{n}{2},\quad   1,\ \mathrm{and}  \ l n+1,
\end{align*}
respectively.

After differentiating, we have
 \begin{align*}
   &\partial_{x}^{k}  \partial_{y}^{n+k-2} p(x,y, (x_i)) \\
   =& k!(n+k-2)! \bigg (\sum_{i \neq j}x_ix_j +\sum_{i=1}^{n}l(n-1)(k+1)x_ix
  + l^2 \binom{n}{2}(k+2)(k+1)\frac{x^2}{2} \\
  & +\sum_{i=1}^{n}(n+k-1)x_iy  +(n+k)(n+k-1)\frac{y^2}{2}  + (ln+1)(k+1)(n+k-1)xy \bigg ).
 \end{align*}
By removing the common factor $k!(n+k-2)!$,
we have the following reduced form of the Hessian matrix by Lemma~\ref{l1.2}.
 \begin{align*}
H =\begin{pmatrix}
n-1      & l(n-1)(k+1)        & n+k-1     \\
                 &                             &                   \\
ln(n-1)(k+1)         & l^2 \binom{n}{2}(k+2)(k+1)       & (ln+1)(k+1)(n+k-1)      \\
                 &                             &                   \\
n(n+k-1)          & (ln+1)(k+1)(n+k-1)       & (n+k)(n+k-1)     \\
\end{pmatrix}.
\end{align*}

Obviously, $\mathrm{Tr}(H)$ is positive. 
Without loss of generality, we assume $k$ to be sufficiently large. 
By Lemma~\ref{l1.3},
it suffices to prove $H_{11}+H_{22}+H_{33}\le 0$ and $\mathrm{det}(H)\ge 0$ for sufficiently large $k$.
Since $H_{11}$ has order $k^4$, whereas both $H_{22}$ and $H_{33}$ have order $k^2$, 
it suffices to prove $H_{11} < 0$. 
By the direct calculation,
we have
\begin{align*}
  H_{11} = k^4 \bigg (l^2 \binom{n}{2}-(ln+1)^2 \bigg )+O(k^3) < 0,\quad \mathrm{for}  \ n>2.
\end{align*}

Let $H'$ be the matrix obtained from the matrix $H$ by removing all terms containing $k$.
\begin{align*}
H' =\begin{pmatrix}
n-1      & l(n-1)       & 1     \\
                 &                             &                   \\
ln(n-1) & l^2 \binom{n}{2}       & ln+1     \\
                 &                             &                   \\
n         & ln+1       & 1    \\
\end{pmatrix}.
\end{align*}

Expanding $\mathrm{det}(H)$, we obtain
\begin{align*}
  \mathrm{det}(H) = k^4 \cdot \mathrm{det}(H') + O(k^3),
\end{align*}
which implies that it suffices to show that $\mathrm{det}(H') >0$. 
By the direct calculation, we have
\begin{align*}
  \mathrm{det}(H')=\frac{l^2 n^2}{2} - \left(\frac{l^2}{2} + 1\right)n + 1 > 0,\quad \mathrm{for}  \ n > 2.
\end{align*}

Thus $H$ has exactly one positive eigenvalue by Lemma~\ref{l1.3}.

(4) For $q=k+1$, 
in this case,
the remaining terms are those for which the degree of $x$ is at least $k+1$,
and the degree of $y$ is at least $n+k-3$.
There are the following six terms
\begin{align*}
  x^{k+1}y^{n+k-3}x_ix_j,\quad  x^{k+2}y^{n+k-3}x_i,\quad  x^{k+1}y^{n+k-2}x_i,\quad
   x^{k+3}y^{n+k-3},\quad   x^{k+1}y^{n+k-1},\ \mathrm{and}  \  x^{k+2}y^{n+k-2}.
\end{align*}

And their coefficients are
 \begin{align*}
  l(n-2),\quad l^2 \binom{n-1}{2},\quad  l(n-1),\quad
  l^3 \binom{n}{3},\quad   ln+1,\ \mathrm{and}  \ l^2 \binom{n}{2},
\end{align*}
respectively.

After differentiating, we have
 \begin{align*}
   &\partial_{x}^{k+1}  \partial_{y}^{n+k-3} p(x,y, (x_i)) \\
   =& (k+1)!(n+k-3)! \bigg(\sum_{i \neq j}l(n-2)x_ix_j +\sum_{i=1}^{n}l^2\binom{n-1}{2}(k+2)x_ix
  + l^3\binom{n}{3}(k+3) \\
  & \times (k+2)\frac{x^2}{2}+\sum_{i=1}^{n}l(n-1)(n+k-2)x_iy  +(ln+1)(n+k-1)(n+k-2)\frac{y^2}{2} \\
    &+l^2 \binom{n}{2}(k+2)(n+k-2)xy \bigg ).
 \end{align*}
By removing the common factor $(k+1)!(n+k-3)!$,
we have the following reduced form of the Hessian matrix by Lemma~\ref{l1.2}.
 \begin{align*}
H =\begin{pmatrix}
l(n-1)(n-2)      & l^2\binom{n-1}{2}(k+2)        & l(n-1)(n+k-2)     \\
                 &                             &                   \\
l^2n\binom{n-1}{2}(k+2)      &  l^3\binom{n}{3}(k+3)(k+2)       & l^2\binom{n}{2}(k+2)(n+k-2)      \\
                 &                             &                   \\
ln(n-1)(n+k-2)          & l^2\binom{n}{2}(k+2)(n+k-2)       &  (ln + 1)(n + k -1)(n + k - 2)     \\
\end{pmatrix}.
\end{align*}

Obviously, $\mathrm{Tr}(H)$ is positive. 
Since $H_{11}$ has order $k^4$, whereas both $H_{22}$ and $H_{33}$ have order $k^2$, 
it suffices to prove $H_{11} < 0$. 
By the direct calculation,
we have
\begin{align*}
 H_{11} = k^4 \bigg (l^3\binom{n}{3}(ln + 1) - l^4\binom{n}{2}^2 \bigg ) + O(k^3) < 0,\quad n>2.
\end{align*}

Let $H'$ be the matrix obtained from the matrix $H$ by removing all terms containing $k$.
\begin{align*}
H' =\begin{pmatrix}
l(n - 1)(n - 2)      & l^2\binom{n-1}{2}       & l(n-1)     \\
                 &                             &                   \\
l^2n\binom{n-1}{2}         & l^3 \binom{n}{3}       & l^2\binom{n}{2}     \\
                 &                             &                   \\
ln(n-1)         & l^2\binom{n}{2}       & ln + 1    \\
\end{pmatrix}.
\end{align*}

Expanding $\mathrm{det}(H)$, we obtain
\begin{align*}
  \mathrm{det}(H) = k^4 \cdot \mathrm{det}(H') + O(k^3),
\end{align*}
which implies that it suffices to show $\mathrm{det}(H') >0$. 
By the direct calculation, we have
\begin{align*}
  \mathrm{det}(H')= \frac{l^4 n}{12} [(l-1)n^4+(6-4l)n^3 +(5l-13)n^2 + (12-2l)n -4] >0,\quad n > 2.
\end{align*}

Thus $H$ has exactly one positive eigenvalue by Lemma~\ref{l1.3}. 
Therefore $\mathcal{F}_n(l,m,t,s)$ is a pre-Lorentzian graph under the condition (i) or (ii).

\end{proof}

The following result follows immediately from Lemma~\ref{l1.4}.

\begin{coro}\label{c1.1}
Suppose that $n \in \mathbb{N}^*$, $m,s,t,l \in \mathbb{N}$, $l\geq m \geq s$ and $l \geq t+m$. 
Then we have the following.
\begin{enumerate}[(i)]
  \item If $m=1$, $m=s+1$ or $l=t+1$, then the independence polynomial of $\mathcal{F}_n(l,m,t,s)$ is log-concave.
  \item If  $l\ge1$ and $t=0$, then the independence polynomial of $\mathcal{F}_n(l,m,t,s)$ is log-concave.
\end{enumerate}

\end{coro}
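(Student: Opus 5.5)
The corollary follows by combining Theorem~\ref{t1.1} with Lemma~\ref{l1.4}. The plan is to verify that $\mathcal{F}_n(l,m,t,s)$, with its colouring, is a partitioned graph in the sense of Definition~\ref{d1.8}. We then transfer the pre-Lorentzian conclusion of Theorem~\ref{t1.1} to log-concavity of the ordinary independence polynomial.

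First, we check the partitioned structure. In $\mathcal{F}_n(l,m,t,s)$ the vertices $x_1,\dots,x_n$ carry pairwise distinct colours, and no other vertex shares any of them. Hence each of these colours is free in the sense of Definition~\ref{d1.3}. All remaining vertices (the centre $c$ and the vertices of the complete graphs $K_m$, $K_l$, $K_{m-s}$, $K_{l-t}$ in each copy) carry a single common colour, the bound colour, with variable $x$. So $\mathcal{F}_n(l,m,t,s)$ is a partitioned graph. Its coloured independence polynomial is exactly the $C(\mathcal{F}_n(l,m,t,s);x,(x_i))$ computed in the proof of Theorem~\ref{t1.1}. Identifying all variables $x,x_1,\dots,x_n$ with a single variable recovers $I(\mathcal{F}_n(l,m,t,s);x)$.

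Second, we apply the theorem. Under hypothesis (i) ($m=1$, $m=s+1$ or $l=t+1$) Theorem~\ref{t1.1}(i) gives that $\mathcal{F}_n(l,m,t,s)$ is pre-Lorentzian. Under hypothesis (ii) ($l\ge 1$, $t=0$) Theorem~\ref{t1.1}(ii) gives the same conclusion. The standing assumptions $l\ge m\ge s$ and $l\ge t+m$ are the same in both statements, so no extra check is needed.

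Finally, we invoke Lemma~\ref{l1.4}: a pre-Lorentzian partitioned graph has a log-concave independence polynomial. This yields both parts of the corollary. There is no real obstacle here. The only point deserving care is the partitioned-graph verification, specifically that the free colours are genuinely singletons after identifying the centres. This holds because the identification merges only the bound vertex $c$ across copies and leaves the $x_i$ distinct.
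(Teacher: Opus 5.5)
Your proposal is correct and follows essentially the paper's argument: the corollary is obtained by combining Theorem~\ref{t1.1} with Lemma~\ref{l1.4}. Your explicit check that $\mathcal{F}_n(l,m,t,s)$ is a partitioned graph (pairwise distinct free colours on $x_1,\dots,x_n$, all other vertices bound) is the one detail the paper leaves implicit.
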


\begin{remark}\label{r1.3}
By Lemma~\ref{p3.2},
conditions (i) and (ii) of Theorem~\ref{t1.1} and Corollary~\ref{c1.1} are only used in the case $n=2$.
For $n\neq 2$, the proof uses nothing beyond $l\ge 1$.
\end{remark}

Then we give the definition of the graph $\mathcal{G}_4(l,m,t,s)$,
which will replace an edge in the operator $E_{\mathcal{G}_4(l,m,t,s)}$.

\begin{definition}\label{d1.1}
Given two copies of $\mathcal{F}(l,m,t,s)$ with the vertices $x_1$ and $x_2$ assigned the bound colour,
define the graph $\mathcal{G}_4(l,m,t,s)$ by adding an edge that connects $x_1$ and $x_2$ (see Figure~\ref{f1.2.2}).
\end{definition}

In particular, $\mathcal{G}_4(1,0,0,0)=W_4$ and $\mathcal{G}_4(2,1,0,0)$ are trees (see Figures~\ref{f1.3} and~\ref{fig-sp-pi}, respectively).

\begin{definition}
Denote by $E_{\mathcal{G}_4(l,m,t,s)}$ the operator on graphs which replaces every edge $e$ of a simple finite undirected graph with the graph $\mathcal{G}_4(l,m,t,s)$,
where the red vertices $c$ are identified with the vertices of $e$ (we call these vertices the {\it endpoints}) (see Figures~\ref{f5.1} and~\ref{f5} for $E_{\mathcal{G}_4(1,0,0,0)}$ and $E_{\mathcal{G}_4(2,1,0,0)}$, respectively).
\end{definition}

\begin{figure}[htbp] 
\centering
\begin{tikzpicture}[scale = 0.6]
\draw[line width=1.2pt](-3,0) ellipse (2cm and 1cm) node {$K_{m-s}$};
\draw[line width=1.2pt](3,0) ellipse (2cm and 1cm)node {$K_{m}$};
\draw[line width=1.2pt](-3,-6) ellipse (2cm and 1cm)node {$K_{l-t}$};
\draw[line width=1.2pt](3,-6) ellipse (2cm and 1cm)node {$K_{l}$};

\fill(3,-3) circle(0.6ex)node[left=8pt,above=3pt]{$x_1$};
\fill(-3,-1) circle(0.6ex);
\fill(3,-1) circle(0.6ex);
\fill(-3,-5) circle(0.6ex);
\fill(3,-5) circle(0.6ex);
\fill(1,0) circle(0.6ex);
\fill(1,-6) circle(0.6ex);

\draw[line width=1.2pt] (-3,-1) -- (-3,-3);
\draw[line width=1.2pt] (-3,-5) -- (-3,-3);
\draw[line width=1.2pt] (3,-3) -- (-3,-3);
\draw[line width=1.2pt] (3,-3) -- (3,-1);
\draw[line width=1.2pt] (3,-3) -- (3,-5);
\draw[line width=1.2pt] (-3,-3) -- (1,0);
\draw[line width=1.2pt] (-3,-3) -- (1,-6);

\fill[red](-3,-3) circle(0.6ex)node[left=8pt,above=3pt]{$c$};

\draw[line width=1.2pt] (8,-3) -- (3,-3);   

\draw[line width=1.2pt](8,0) ellipse (2cm and 1cm) node {$K_{m}$};
\draw[line width=1.2pt](14,0) ellipse (2cm and 1cm)node {$K_{m-s}$};
\draw[line width=1.2pt](8,-6) ellipse (2cm and 1cm)node {$K_{l}$};
\draw[line width=1.2pt](14,-6) ellipse (2cm and 1cm)node {$K_{l-t}$};

\fill(8,-3) circle(0.6ex) node [right=8pt,above=3pt]{$x_2$};

\fill(8,-1) circle(0.6ex);
\fill(14,-1) circle(0.6ex);
\fill(8,-5) circle(0.6ex);
\fill(14,-5) circle(0.6ex);
\fill(10,0) circle(0.6ex);
\fill(10,-6) circle(0.6ex);

\draw[line width=1.2pt] (8,-1) -- (8,-3);
\draw[line width=1.2pt] (8,-5) -- (8,-3);
\draw[line width=1.2pt] (14,-3) -- (8,-3);
\draw[line width=1.2pt] (14,-3) -- (14,-1);
\draw[line width=1.2pt] (14,-3) -- (14,-5);
\draw[line width=1.2pt] (14,-3) -- (10,0);
\draw[line width=1.2pt] (14,-3) -- (10,-6);

\fill[red](14,-3) circle(0.6ex)node[right=8pt,above=3pt]{$c$};

\end{tikzpicture}
\caption{$\mathcal{G}_4(l,m,t,s)$
}\label{f1.2.2}
\end{figure}

\begin{figure}[htbp] 
\centering
\begin{tikzpicture}[scale = 1]

\fill(-1,0) circle(0.6ex);
\fill(1,0) circle(0.6ex);

\draw[line width=1.2pt] (-3,0) -- (-1,0) -- (1,0) -- (3,0);
\fill(-3,-1) circle(0.6ex);
\fill(-1,-1) circle(0.6ex);
\fill(1,-1) circle(0.6ex);
\fill(3,-1) circle(0.6ex);
\draw[line width=1.2pt] (-3,0) -- (-3,-1);
\draw[line width=1.2pt] (-1,0) -- (-1,-1);
\draw[line width=1.2pt] (1,0) -- (1,-1);
\draw[line width=1.2pt] (3,0) -- (3,-1);

\fill(-3,1) circle(0.6ex);
\fill(-1,1) circle(0.6ex);
\fill(1,1) circle(0.6ex);
\fill(3,1) circle(0.6ex);
\draw[line width=1.2pt] (-3,0) -- (-3,1);
\draw[line width=1.2pt] (-1,0) -- (-1,1);
\draw[line width=1.2pt] (1,0) -- (1,1);
\draw[line width=1.2pt] (3,0) -- (3,1);

\fill(-3,2) circle(0.6ex);
\fill(-1,2) circle(0.6ex);
\fill(1,2) circle(0.6ex);
\fill(3,2) circle(0.6ex);
\draw[line width=1.2pt] (-3,1) -- (-3,2);
\draw[line width=1.2pt] (-1,1) -- (-1,2);
\draw[line width=1.2pt] (1,1) -- (1,2);
\draw[line width=1.2pt] (3,1) -- (3,2);

\fill[red](-3,0) circle(0.6ex);
\fill[red](3,0) circle(0.6ex);
\end{tikzpicture}
\caption{$\mathcal{G}_4(2,1,0,0)$
}\label{fig-sp-pi}
\end{figure}

The crucial point of our result is that the graph in $E_{\mathcal{G}_4(l,m,t,s)}(\mathbf{G})$ can also be constructed in another way.

\begin{prop}\label{p1.3}
Let $\mathbf{G}$ denote the set of all simple finite undirected graphs.
Then every graph in $E_{\mathcal{G}_4(l,m,t,s)}(\mathbf{G})$ can be constructed by N-gluing copies of $\mathcal{F}_n(l,m,t,s)$ across free vertices.
\end{prop}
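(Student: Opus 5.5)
The plan is to build, for a given graph $G\in\mathbf{G}$, one copy of $\mathcal{F}_{d(v)}(l,m,t,s)$ for each vertex $v$ of $G$ (with $d(v)$ the degree of $v$), and to show that N-gluing these copies along free vertices gives exactly $E_{\mathcal{G}_4(l,m,t,s)}(G)$. Isolated vertices are the only exception, since $\mathcal{F}_0$ is not defined. Each such vertex contributes just an isolated vertex to the image, and taking a disjoint union does not affect the pre-Lorentzian argument. Alternatively, if one reads the proposition literally, isolated vertices are simply left out or handled separately.

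The construction goes as follows. For each vertex $v$ of degree $d=d(v)\ge1$, take a copy $\mathcal{F}^{(v)}:=\mathcal{F}_{d}(l,m,t,s)$ whose centre $c_v$ will play the role of $v$. List the edges incident to $v$ as $e_1,\dots,e_d$, and give the free vertex $x_i$ of $\mathcal{F}^{(v)}$ a colour labelled $(v,e_i)$. Now consider an edge $e=uv$. It is incident to $u$ and to $v$, so it produces exactly two free vertices: $x_{(u,e)}$ in $\mathcal{F}^{(u)}$ and $x_{(v,e)}$ in $\mathcal{F}^{(v)}$. N-glue the colour $(u,e)$ with the colour $(v,e)$. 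By the definition of N-gluing, the identified colour class is $\{x_{(u,e)},x_{(v,e)}\}$, and it is completed to a clique. This adds only the single edge $x_{(u,e)}x_{(v,e)}$. We perform these gluings one edge at a time. At each step the two colours involved are free in the current graph, because each colour appears only once before its gluing, and edges already glued use different colours. So every step is an N-gluing across free vertices, which is the form Lemma~\ref{l1.1} requires. The gluings after the first one glue two colours inside the same connected graph. Since Gl is defined on a disjoint union, I will treat this as first N-gluing the disjoint pieces and then identifying further colours. I need to check whether the intended reading of the definition permits this; if it does not, the free colours on the two sides can be treated as coming from two formally different coloured graphs.

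Next comes the identification with $E_{\mathcal{G}_4(l,m,t,s)}(G)$. For $e=uv$, the part of the glued graph lying over $e$ consists of the $e$-branch of $\mathcal{F}^{(u)}$ and the $e$-branch of $\mathcal{F}^{(v)}$. Each branch is a copy of $\mathcal{F}(l,m,t,s)$ with centre $c_u$ (resp.\ $c_v$) and special vertex $x_{(u,e)}$ (resp.\ $x_{(v,e)}$). The two branches are joined by the edge $x_{(u,e)}x_{(v,e)}$. After gluing, these special vertices carry the bound colour. This is precisely $\mathcal{G}_4(l,m,t,s)$ of Definition~\ref{d1.1}, with its two red vertices $c$ identified with $c_u$ and $c_v$. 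Moreover, all branches at $v$ share the centre $c_v$, because that is how $\mathcal{F}_{d(v)}$ is defined. This matches the operator, where every copy of $\mathcal{G}_4$ on an edge at $v$ has its endpoint identified with $v$. No other edges appear: the copies of $\mathcal{F}$ are vertex-disjoint apart from the centres, and the only edges added by N-gluing are the $x$--$x$ edges. So we obtain a bijection of vertices that preserves edges, and once every colour has been glued the colouring becomes the obvious partition with only the bound colour.

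The main obstacle is the bookkeeping, not any computation. First, the colours have to be verified free at each gluing step. Second, repeated gluings inside one connected component have to be compatible with the formal definition of N-gluing on disjoint unions. Third, the final colouring must be shown to agree with the partitioned structure that Lemma~\ref{l1.4} needs. In particular, we must make sure that the glued $x$ vertices end up in the bound colour class. Otherwise that class would become a clique through the N-gluing and would add edges among the bound vertices. I would handle this by gluing only free colours with each other, which keeps the bound colour from ever being merged.
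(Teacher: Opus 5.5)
Your proposal is correct and follows the paper's proof. Like the paper, you take one copy of $\mathcal{F}_{\deg(v)}(l,m,t,s)$ per vertex, N-glue the two corresponding free colours for each edge, and observe that the piece over each edge is a copy of $\mathcal{G}_4(l,m,t,s)$ whose two centres are the endpoints; your extra bookkeeping (isolated vertices, freeness at each step, folding glued classes into the bound colour) goes beyond what the paper writes, and your worry about gluing inside a single graph is already settled by the definition, which allows $c_1,c_2$ both to lie in $\mathrm{Im}\, i_1$.
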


\begin{proof}
Let $G = (V, E)$ be a simple finite undirected graph. For any vertex $v_i \in V(G)$, let $\mathcal{F}^i_{\mathrm{deg}(v_i)}(l,m,t,s)$
 be a copy of $\mathcal{F}_{\mathrm{deg}(v_i)}(l,m,t,s)$ and $c^i$ be its center. 
 
 For each edge $ e_{ij} = v_i v_j \in E(G)$, we N-glue the corresponding free vertices of $\mathcal{F}^i_{\mathrm{deg}(v_i)}(l,m,t,s)$ and $\mathcal{F}^j_{\mathrm{deg}(v_j)}(l,m,t,s)$. 
Note that the graph obtained in this way is exactly $E_{\mathcal{G}_4(l,m,t,s)}(G)$.

For $\{v_i,v_j\}\in E(G)$,
let $e_{ij}$ be the unique edge connecting the free vertices of $\mathcal{F}^i_{\mathrm{deg}(v_i)}(l,m,t,s)$ and $\mathcal{F}^j_{\mathrm{deg}(v_j)}(l,m,t,s)$. 
It is clear that we can choose a unique $\mathcal{G}_4(l,m,t,s)$, which contains $e_{ij}$ with two endpoints $c^i$ and $c^j$.
This $\mathcal{G}_4(l,m,t,s)$ can be seen as the image of the edge $\{v_i,v_j\}$ under $E_{\mathcal{G}_4(l,m,t,s)}$.

\end{proof}

 \begin{figure}[htbp]
\centering

\begin{tikzpicture}[scale = 0.8]
\fill(-6,0) circle(0.6ex);
\fill(-4,0) circle(0.6ex);
\fill(-2,0) circle(0.6ex);
\fill(-4,-2) circle(0.6ex);

\draw[line width=1.2pt] (-6,0) -- (-4,0) -- (-2,0);
\draw[line width=1.2pt] (-4,0) -- (-4,-2);
\draw[line width=1.2pt] (5,-1) -- (5,-2);
\draw[->] (-1,0) -- (1,0) node[above=10pt,left=5pt] {$E_{\mathcal{G}_4(1,0,0,0)}$};

\draw[line width=1.2pt, red] (2,0) -- (3,0);
\draw[line width=1.2pt] (3,0) -- (4,0);
\draw[line width=1.2pt, blue] (4,0) -- (5,0) -- (6,0);
\draw[line width=1.2pt] (6,0) -- (7,0);
\draw[line width=1.2pt, orange] (7,0) -- (8,0);

\draw[line width=1.2pt, red] (2,0) -- (1.5,1);
\draw[line width=1.2pt, red] (3,0) -- (2.5,1);

\draw[line width=1.2pt, blue] (4,0) -- (3.5,1);
\draw[line width=1.2pt, blue] (6,0) -- (6.5,1);
\draw[line width=1.2pt, blue] (5,0) -- (4.5,1);
\draw[line width=1.2pt, blue] (5,0) -- (5.5,1);
\draw[line width=1.2pt, orange] (7,0) -- (7.5,1);
\draw[line width=1.2pt, orange] (8,0) -- (8.5,1);

\fill[red](2,0) circle(0.6ex);
\fill[red](3,0) circle(0.6ex);
\fill[blue](4,0) circle(0.6ex);
\fill[blue](5,0) circle(0.6ex);
\fill[blue](6,0) circle(0.6ex);
\fill[orange](7,0) circle(0.6ex);
\fill[orange](8,0) circle(0.6ex);

\fill[red](1.5,1) circle(0.6ex);
\fill[red](2.5,1) circle(0.6ex);
\fill[blue](3.5,1) circle(0.6ex);
\fill[blue](4.5,1) circle(0.6ex);
\fill[blue](5.5,1) circle(0.6ex);
\fill[blue](6.5,1) circle(0.6ex);
\fill[orange](7.5,1) circle(0.6ex);
\fill[orange](8.5,1) circle(0.6ex);


\draw[line width=1.2pt, blue] (5,0) -- (5,-1);
\draw[line width=1.2pt, blue] (5,0) -- (6,-0.5);
\draw[line width=1.2pt, blue] (5,-1) -- (6,-1.5);

\fill[blue](5,-1) circle(0.6ex);
\fill[blue](6,-1.5) circle(0.6ex);
\fill[blue](6,-0.5) circle(0.6ex);

\draw[line width=1.2pt, green] (5,-2) -- (6,-2.5);
\draw[line width=1.2pt, green] (5,-3) -- (6,-3.5);
\draw[line width=1.2pt, green] (5,-2) -- (5,-3);

\fill[green](6,-3.5) circle(0.6ex);
\fill[green](6,-2.5) circle(0.6ex);

\end{tikzpicture}
\caption{N-gluing copies of $\mathcal{F}(1,0,0,0)$}\label{f5.1}
\end{figure}

\begin{figure}[htbp]
\centering

\begin{tikzpicture}[scale = 0.8]
\fill(-6,0) circle(0.6ex);
\fill(-4,0) circle(0.6ex);
\fill(-2,0) circle(0.6ex);
\fill(-4,-2) circle(0.6ex);

\draw[line width=1.2pt] (-6,0) -- (-4,0) -- (-2,0);
\draw[line width=1.2pt] (-4,0) -- (-4,-2);
\draw[line width=1.2pt] (5,-1) -- (5,-2);
\draw[->] (-1,0) -- (1,0) node[above=10pt,left=5pt] {$E_{\mathcal{G}_4(2,1,0,0)}$};

\draw[line width=1.2pt, red] (2,0) -- (3,0);
\draw[line width=1.2pt] (3,0) -- (4,0);
\draw[line width=1.2pt, blue] (4,0) -- (5,0) -- (6,0);
\draw[line width=1.2pt] (6,0) -- (7,0);
\draw[line width=1.2pt, orange] (7,0) -- (8,0);

\draw[line width=1.2pt, red] (2,0) -- (2,1) -- (2,2);
\draw[line width=1.2pt, red] (2,0) -- (1.5,1);
\draw[line width=1.2pt, red] (3,0) -- (3,1) -- (3,2);
\draw[line width=1.2pt, red] (3,0) -- (2.5,1);

\draw[line width=1.2pt, blue] (4,0) -- (4,1) -- (4,2);
\draw[line width=1.2pt, blue] (4,0) -- (3.5,1);
\draw[line width=1.2pt, blue] (6,0) -- (6,1) -- (6,2);
\draw[line width=1.2pt, blue] (6,0) -- (6.5,1);
\draw[line width=1.2pt, blue] (5,0) -- (4.5,1) -- (4.5,2);
\draw[line width=1.2pt, blue] (5,0) -- (5,1) -- (5,2);
\draw[line width=1.2pt, blue] (5,0) -- (5.5,1) -- (5.5,2);
\draw[line width=1.2pt, orange] (7,0) -- (7,1) -- (7,2);
\draw[line width=1.2pt, orange] (7,0) -- (7.5,1);
\draw[line width=1.2pt, orange] (8,0) -- (8,1) -- (8,2);
\draw[line width=1.2pt, orange] (8,0) -- (8.5,1);

\fill[red](2,0) circle(0.6ex);
\fill[red](3,0) circle(0.6ex);
\fill[blue](4,0) circle(0.6ex);
\fill[blue](5,0) circle(0.6ex);
\fill[blue](6,0) circle(0.6ex);
\fill[orange](7,0) circle(0.6ex);
\fill[orange](8,0) circle(0.6ex);

\fill[red](2,1) circle(0.6ex);
\fill[red](3,1) circle(0.6ex);
\fill[red](1.5,1) circle(0.6ex);
\fill[red](2.5,1) circle(0.6ex);
\fill[blue](4,1) circle(0.6ex);
\fill[blue](3.5,1) circle(0.6ex);
\fill[blue](4.5,1) circle(0.6ex);
\fill[blue](5,1) circle(0.6ex);
\fill[blue](5.5,1) circle(0.6ex);
\fill[blue](6,1) circle(0.6ex);
\fill[blue](6.5,1) circle(0.6ex);
\fill[orange](7,1) circle(0.6ex);
\fill[orange](8,1) circle(0.6ex);
\fill[orange](7.5,1) circle(0.6ex);
\fill[orange](8.5,1) circle(0.6ex);

\fill[red](2,2) circle(0.6ex);
\fill[red](3,2) circle(0.6ex);
\fill[blue](4,2) circle(0.6ex);
\fill[blue](5,2) circle(0.6ex);
\fill[blue](5,2) circle(0.6ex);
\fill[blue](5.5,2) circle(0.6ex);
\fill[blue](4.5,2) circle(0.6ex);
\fill[blue](6,2) circle(0.6ex);
\fill[orange](7,2) circle(0.6ex);
\fill[orange](8,2) circle(0.6ex);

\draw[line width=1.2pt, blue] (5,0) -- (5,-1);
\draw[line width=1.2pt, blue] (5,0) -- (4,-0.5);
\draw[line width=1.2pt, blue] (5,0) -- (4.2,-0.9);
\draw[line width=1.2pt, blue] (5,0) -- (6,-0.5);
\draw[line width=1.2pt, blue] (5,-1) -- (6,-1) -- (7,-1);
\draw[line width=1.2pt, blue] (5,-1) -- (6,-1.5);

\fill[blue](5,-1) circle(0.6ex);
\fill[blue](4.2,-0.9) circle(0.6ex);
\fill[blue](6,-1.5) circle(0.6ex);
\fill[blue](6,-1) circle(0.6ex);
\fill[blue](7,-1) circle(0.6ex);
\fill[blue](4,-0.5) circle(0.6ex);
\fill[blue](6,-0.5) circle(0.6ex);

\draw[line width=1.2pt, green] (5,-2) -- (6,-2) -- (7,-2);
\draw[line width=1.2pt, green] (5,-2) -- (6,-2.5);
\draw[line width=1.2pt, green] (5,-3) -- (6,-3) -- (7,-3);
\draw[line width=1.2pt, green] (5,-3) -- (6,-3.5);
\draw[line width=1.2pt, green] (5,-2) -- (5,-3);

\fill[green](5,-2) circle(0.6ex);
\fill[green](6,-2) circle(0.6ex);
\fill[green](7,-2) circle(0.6ex);
\fill[green](5,-3) circle(0.6ex);
\fill[green](6,-3) circle(0.6ex);
\fill[green](7,-3) circle(0.6ex);
\fill[green](6,-3.5) circle(0.6ex);
\fill[green](6,-2.5) circle(0.6ex);

\end{tikzpicture}
\caption{N-gluing copies of $\mathcal{F}(2,1,0,0)$}\label{f5}
\end{figure}

Now we are in a position to present the main results of this paper.
\begin{thm}\label{t1.2}
Suppose that $l \in \mathbb{N}^*$, $m,s,t \in \mathbb{N}$. 
Let $\mathbf{G}$ denote the set of all simple finite undirected graphs (not necessarily connected) with no vertex of degree $2$. 
Then  the
independence polynomials of all graphs in $E_{\mathcal{G}_4(l,m,t,s)}(\mathbf{G})$ are log-concave.
\end{thm}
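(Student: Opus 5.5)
The plan is to combine Proposition~\ref{p1.3} with the gluing Lemma~\ref{l1.1} and Lemma~\ref{l1.4}. Let $G\in\mathbf{G}$, so $G$ has no vertex of degree $2$. If $G$ has no edges, then $E_{\mathcal{G}_4(l,m,t,s)}(G)=G$ is edgeless, and $I(G;x)=(1+x)^{|V|}$ is log-concave. Isolated vertices contribute a factor $(1+x)$ in general. Log-concavity of positive sequences without internal zeros is preserved under products, and independence polynomials have no internal zeros. Hence we may restrict attention to the non-isolated vertices, each of degree $1$ or at least $3$.

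For each vertex $v_i$ of degree $d_i\ge1$, take a copy $\mathcal{F}^i_{d_i}(l,m,t,s)$. By Remark~\ref{r1.3}, Theorem~\ref{t1.1} shows that this copy is pre-Lorentzian whenever $d_i\neq 2$, using only $l\ge 1$. This is exactly where the hypothesis $l\in\mathbb{N}^*$ and the exclusion of degree-$2$ vertices are used. The case $n=2$ is the only one needing the extra conditions of Lemma~\ref{p3.2}, and it is avoided. The case $d_i=1$ is the product of linear forms treated in the $n=1$ case. For $d_i\ge3$, the asymptotic Hessian analysis in cases (1)--(4) of the proof gives a suitable $k$.

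By Proposition~\ref{p1.3}, $E_{\mathcal{G}_4(l,m,t,s)}(G)$ is obtained from these copies by N-gluing, for each edge $v_iv_j$, a free vertex of $\mathcal{F}^i$ to a free vertex of $\mathcal{F}^j$. Since each free colour class is a singleton, N-gluing two of them only adds the single edge $x x'$ and merges the colours. We apply Lemma~\ref{l1.1} inductively, one edge at a time, adding each new piece to the pre-Lorentzian graph already built.

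The main obstacle is the induction itself. Lemma~\ref{l1.1} is stated for gluing two pre-Lorentzian graphs, but we also have to glue two free vertices lying in the same, already connected, partial graph whenever $G$ has cycles. I would handle this by first taking the disjoint union of all $\mathcal{F}^i$. The coloured independence polynomial of a disjoint union is the product, so after the common homogenisation and the monomial $(xy)^k$ the union stays Lorentzian by Propositions~\ref{p1.1} and~\ref{p1.2}. I then need the version of Lemma~\ref{l1.1} for self-gluing of two free colours within one pre-Lorentzian graph. This should follow from the same argument as in~\cite{BH2025}: merging two free singleton colours $x_a,x_b$ with an added edge corresponds to taking the part of the polynomial without the monomial $x_ax_b$ and identifying the two variables, an operation that preserves the Lorentzian property. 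If that operation is not directly available, I would instead invoke the corresponding general statement of~\cite{BH2025}, whose gluing result covers this case. Finally, the resulting partitioned graph is pre-Lorentzian, so Lemma~\ref{l1.4} yields log-concavity of the independence polynomial of $E_{\mathcal{G}_4(l,m,t,s)}(G)$.
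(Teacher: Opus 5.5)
Your proposal is correct and follows the paper's proof. Like the paper, you reduce via disjoint unions, decompose $E_{\mathcal{G}_4(l,m,t,s)}(G)$ by Proposition~\ref{p1.3} into copies of $\mathcal{F}_{\deg(v_i)}(l,m,t,s)$ (pre-Lorentzian for $\deg(v_i)\neq 2$ by Remark~\ref{r1.3}), reassemble with Lemma~\ref{l1.1}, and conclude with Lemma~\ref{l1.4}. Your worry about gluing two free vertices of the same component is already covered: the N-gluing definition takes $c_1,c_2\in \mathrm{Im}\, i_1\sqcup \mathrm{Im}\, i_2$, so Lemma~\ref{l1.1} applies directly to the disjoint union, and the paper relies on exactly this without comment.
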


\begin{proof}
From the definition,
the coloured independence polynomials satisfy
$$C(\mathcal{G}_1 \sqcup \mathcal{G}_2;x_{i(v)},y_{j(v)}) = C(\mathcal{G}_1;x_{i(v)}) \cdot C(\mathcal{G}_2;y_{j(v)}),$$
for any two coloured graphs $\mathcal{G}_1=(G_1,i)$ and $\mathcal{G}_2=(G_2,j)$.
By Propositions~\ref{p1.1} and~\ref{p1.2},
it is obvious that the disjoint union of graphs preserves the pre-Lorentzian property.
So it suffices to discuss the connected graphs.

Let $n \in \mathbb{N}^*$ and $G$ be a connected graph in $E_{\mathcal{G}_4(l,m,t,s)}(\mathbf{G})$. 
By  Proposition~\ref{p1.3}, we know that $G$ can be constructed by N-gluing copies of $\mathcal{F}_n(l,m,t,s)$ across free vertices. 
By Remark~\ref{r1.3},
we have that the coloured graph $\mathcal{F}_n(l,m,t,s)$ is a pre-Lorentzian graph.
So the graph $G$ is also a pre-Lorentzian graph by Lemma~\ref{l1.1}.
If we consider $G$ as a partitioned graph without free vertices,
then the independence polynomial of $G$ is log-concave by Propositions~\ref{p1.1},~\ref{p1.2} and Lemma~\ref{l1.4}.
\end{proof}

If $\mathbf{G}$ denotes the set of all simple finite undirected graphs (not necessarily connected),
then the conditions (i) and (ii) of Theorem~\ref{t1.1} are necessary.
So we obtain the following result,
whose proof is similar to Theorem~\ref{t1.2}.
We omit it for brevity.
\begin{thm}\label{t1.3}
Suppose that  $m,s,t,l \in \mathbb{N}$, $l\geq m \geq s$ and $l \geq t+m$. 
Then we have the following results.
 \begin{enumerate}[(i)]
  \item If $m=1$, $m=s+1$ or $l=t+1$, then the
independence polynomials of all graphs in $E_{\mathcal{G}_4(l,m,t,s)}(\mathbf{G})$ are log-concave.
  \item If $l\ge 1$ and $t=0$, then the
independence polynomials of all graphs in $E_{\mathcal{G}_4(l,m,t,s)}(\mathbf{G})$ are log-concave.
\end{enumerate}
\end{thm}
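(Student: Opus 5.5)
The plan is to repeat the argument of Theorem~\ref{t1.2}, with Theorem~\ref{t1.1} now used in full strength, because the degree-$2$ restriction is no longer available. First I reduce to connected graphs. Coloured independence polynomials are multiplicative over disjoint unions, and by Propositions~\ref{p1.1} and~\ref{p1.2} a disjoint union of pre-Lorentzian graphs is pre-Lorentzian. Also, isolated vertices of $G$ give isolated vertices in the image, and these contribute only a factor $(1+x)$, which is Lorentzian after homogenisation. So it suffices to treat a connected graph $G\in\mathbf{G}$ with at least one edge.

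By Proposition~\ref{p1.3}, $E_{\mathcal{G}_4(l,m,t,s)}(G)$ is obtained by N-gluing the copies $\mathcal{F}^i_{\deg(v_i)}(l,m,t,s)$, one for each vertex $v_i$, across free vertices. The degrees $\deg(v_i)$ are now arbitrary positive integers, and in particular can equal $2$. Each factor is handled by Theorem~\ref{t1.1}:
\begin{itemize}
\item For $\deg(v_i)=1$ the factor is a product of linear forms, hence pre-Lorentzian with no assumptions.
\item For $\deg(v_i)\ge 3$, Remark~\ref{r1.3} shows that only $l\ge 1$ is needed.
\item For $\deg(v_i)=2$ we genuinely need Lemma~\ref{p3.2}, and hence condition (i) or (ii).
\end{itemize}
It remains to check that $l\ge 1$ under either condition.
\begin{itemize}
\item Under (ii) this is assumed.
\item Under (i), $m=1$ or $m=s+1$ gives $m\ge 1$, so $l\ge m\ge 1$.
\item Under (i), $l=t+1$ gives $l\ge1$ directly.
\end{itemize}
So every $\mathcal{F}^i_{\deg(v_i)}(l,m,t,s)$ is pre-Lorentzian.

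Applying Lemma~\ref{l1.1} repeatedly, once for each edge of $G$, shows that the glued graph is pre-Lorentzian. The order of the gluings needs care. Each gluing consumes one free vertex from each of two components, or two free vertices of a single component if the edge closes a cycle. I therefore need Lemma~\ref{l1.1} in the form from~\cite{BH2025} that also allows N-gluing two free colours within one pre-Lorentzian graph. Alternatively, I can note that the gluing step in~\cite{BH2025} is the operation of replacing $x_{c_1},x_{c_2}$ by a single variable and removing the $x_{c_1}x_{c_2}$ monomials. That operation acts on each pre-Lorentzian factor independently of whether the two colours lie in one component or in two.

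After all gluings no free vertices remain, and every vertex has the bound colour. Lemma~\ref{l1.4}, together with Propositions~\ref{p1.1} and~\ref{p1.2}, then gives the log-concavity of the independence polynomial.

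I expect the main obstacle to be the bookkeeping in this gluing step: confirming that Lemma~\ref{l1.1} applies to the self-gluings needed when $G$ has cycles, and that it applies to multiple gluings performed in any order. If the lemma as quoted covers only gluing two distinct graphs, I would first glue along a spanning tree of $G$ and then handle each remaining edge by a self-gluing, invoking the stronger statement of~\cite{BH2025}.
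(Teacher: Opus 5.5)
Your proposal is correct and follows the same route as the paper. The paper omits this proof as analogous to Theorem~\ref{t1.2}: reduce to connected graphs, realise $E_{\mathcal{G}_4(l,m,t,s)}(G)$ via Proposition~\ref{p1.3} as an N-gluing of copies of $\mathcal{F}_{\deg(v_i)}(l,m,t,s)$, get each copy pre-Lorentzian from Theorem~\ref{t1.1} (conditions (i) or (ii) are needed only at degree-$2$ vertices), and conclude with Lemmas~\ref{l1.1} and~\ref{l1.4}. Your extra bookkeeping for isolated vertices, degree-$1$ vertices and cycle-closing gluings is sound; the latter is already covered because the N-gluing definition allows $c_1,c_2$ to lie in the same $\mathrm{Im}\, i_1$.
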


The following result,
which is the main theorem of~\cite{BH2025},
follows immediately from Theorem~\ref{t1.3}.

\begin{coro}[\rm\cite{BH2025}]
The independence polynomials of all graphs in $E_{\mathcal{G}_4(1,0,0,0)}(\mathbf{G})$ are log-concave.
\end{coro}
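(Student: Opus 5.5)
The statement to prove is the final corollary: the independence polynomial of every graph in $E_{\mathcal{G}_4(1,0,0,0)}(\mathbf{G})$ is log-concave, with $\mathbf{G}$ the set of all simple finite graphs. The plan is to specialise Theorem~\ref{t1.3} to $(l,m,t,s)=(1,0,0,0)$. The only work is to check that this quadruple meets the standing hypotheses and at least one of the alternative conditions of that theorem. Then I identify $\mathcal{G}_4(1,0,0,0)$ with $W_4$.

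First, the standing hypotheses hold: $l=1\ge m=0\ge s=0$ and $l=1\ge t+m=0$. Next, condition (ii) of Theorem~\ref{t1.3} requires $l\ge 1$ and $t=0$, and both hold. Condition (i) also applies, since $l=t+1$. Either condition makes Theorem~\ref{t1.3} give log-concavity for every graph in $E_{\mathcal{G}_4(1,0,0,0)}(\mathbf{G})$.

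Underneath, this rests on Lemma~\ref{p3.2} with $b=l=1$ and multiplier $(l-t)x+y=x+y$, so $a=1$; there both $a=1$ and $a=b$ hold. This covers the case $n=2$ of Theorem~\ref{t1.1}, while the remaining cases $n\neq 2$ need only $l\ge1$.

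Finally, I would confirm that $\mathcal{G}_4(1,0,0,0)$ is exactly $W_4$, so that the operator agrees with the one in \cite{BH2025}. In $\mathcal{F}(1,0,0,0)$ the cliques $K_m$ and $K_{m-s}$ are empty, and $K_{l-t}=K_l=K_1$. So $c$ carries one pendant vertex, $x_i$ carries one pendant vertex, and $c$ is adjacent to $x_i$. Joining two copies by the edge $x_1x_2$ gives the path $c\,x_1\,x_2\,c$ with a pendant vertex at each of its four vertices. This is $W_4$ of Figure~\ref{f1.3}, with the endpoints $c$ drawn in red. There is no real obstacle here; the only point needing care is this matching of the construction with $W_4$.
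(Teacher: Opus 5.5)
Your proposal is correct and takes the same route as the paper, which obtains this corollary directly as the special case $(l,m,t,s)=(1,0,0,0)$ of Theorem~\ref{t1.3}. Your checks of the standing hypotheses, of conditions (i) and (ii), of Lemma~\ref{p3.2} with $a=b=1$, and of the identification $\mathcal{G}_4(1,0,0,0)=W_4$ spell out what the paper states only in passing.
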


\begin{coro}
The independence polynomials of all graphs in $E_{\mathcal{G}_4(2,1,0,0)}(\mathbf{G})$ are log-concave.
\end{coro}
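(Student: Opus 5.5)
The statement to prove is the last corollary: the independence polynomials of all graphs in $E_{\mathcal{G}_4(2,1,0,0)}(\mathbf{G})$ are log-concave. I will obtain it as a specialisation of Theorem~\ref{t1.3}.

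The first step is to check that the parameters $(l,m,t,s)=(2,1,0,0)$ satisfy the standing hypotheses of Theorem~\ref{t1.3}. All four numbers lie in $\mathbb{N}$. We have $l=2\ge m=1\ge s=0$, and $l=2\ge t+m=1$.

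The second step is to verify one of the two sufficient conditions. Condition (i) holds because $m=1$. Condition (ii) also holds independently, since $l=2\ge 1$ and $t=0$. So either part of Theorem~\ref{t1.3} applies and gives log-concavity for every image graph $E_{\mathcal{G}_4(2,1,0,0)}(G)$ with $G$ an arbitrary simple finite graph.

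For completeness I would also trace the chain of results behind Theorem~\ref{t1.3} at these parameters.
\begin{itemize}
\item For $n=2$, Theorem~\ref{t1.1} rests on Lemma~\ref{p3.2}. The relevant factor is $(x+y)[(2x+x_1+y)(2x+x_2+y)+xy]$, which is the case $a=1$, $b=2$ of that lemma.
\item For $n\ne 2$, only $l\ge1$ is needed, by Remark~\ref{r1.3}.
\item Hence every $\mathcal{F}_n(2,1,0,0)$ is pre-Lorentzian.
\item Proposition~\ref{p1.3} writes each image graph as an N-gluing of such graphs across free vertices.
\item Lemma~\ref{l1.1} shows the glued graph is pre-Lorentzian, and Lemma~\ref{l1.4} then gives log-concavity.
\end{itemize}

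There is no real obstacle in this argument. The only point needing care is confirming that the $n=2$ case is covered by Lemma~\ref{p3.2}, and it is, via $a=1$.
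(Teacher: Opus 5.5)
Your proposal is correct and follows the paper's route: the paper also gets this corollary as an immediate specialisation of Theorem~\ref{t1.3}. Checking that $(l,m,t,s)=(2,1,0,0)$ satisfies $l\ge m\ge s$, $l\ge t+m$ and condition (i) (or (ii)) is all that is needed, and your trace of the $n=2$ case through Lemma~\ref{p3.2} with $a=1$, $b=2$ is accurate.
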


If $\mathbf{G}$ is the set of all simple finite forests,
then the graphs in the following two special cases $E_{\mathcal{G}_4(1,0,0,0)}(\mathbf{G})$ and $E_{\mathcal{G}_4(2,1,0,0)}(\mathbf{G})$ are clawed trees.
So our results not only make some progress on Alavi et al.'s conjecture,
but also complement the result of Chudnovsky and Seymour.

\section*{Acknowledgements}
\hspace*{\parindent}

This work was supported partially by the National Natural Science Foundation of China (No. 12371330).

The authors thank the anonymous reviewers for their  careful reading and valuable suggestions.


\end{document}